\documentclass[11pt]{article}
\usepackage{amsthm,amssymb,amsmath}
\usepackage{epsfig}
\usepackage{verbatim}
\usepackage{xcolor}
\usepackage{mathrsfs}
\usepackage[utf8]{inputenc}
\usepackage{url}
\usepackage{mathtools}

\newtheorem{theorem}{Theorem}[section]
\newtheorem{lemma}[theorem]{Lemma}
\newtheorem{corollary}[theorem]{Corollary}
\newtheorem{proposition}[theorem]{Proposition}
\newtheorem{conjecture}[theorem]{Conjecture}

\newtheorem{remark}[theorem]{Remark}
\newcommand{\PP}{\mathbb P}
\newcommand{\EE}{\mathbb E}

\makeatletter
\newcommand\footnoteref[1]{\protected@xdef\@thefnmark{\ref{#1}}\@footnotemark}
\makeatother

\title{Acyclic sets and colorings in digraphs under
{restrictions on degrees and cycle lengths}}

\author{Ararat Harutyunyan\footnote{Corresponding author. Email: \texttt{ararat.harutyunyan@lamsade.dauphine.fr}} \footnote{LAMSADE, Universit\'e Paris Dauphine - PSL, Paris, France} \and Colin McDiarmid\footnote{Department of Statistics, University of Oxford, Oxford, UK} \and Gil Puig i Surroca\footnotemark[2]}

\begin{document}

\maketitle

\begin{abstract}
Given a digraph $D$, we denote by $\vec{\alpha}(D)$ the maximum size of an acyclic set of $D$ (i.e.~a set of vertices which induces a subdigraph with no directed cycles), and by $\vec\chi(D)$ the minimum number of acyclic sets into which $V(D)$ can be partitioned. In this paper, we study $\vec\alpha(D)$ and $\vec\chi(D)$ from various perspectives,
{including restrictions on degrees and cycle lengths.} A main result
 is that, if $D$ is a random $r$-regular digon-free simple digraph
of order $n$, then $\vec{\alpha}(D) = \Theta(n \log r /r)$ with high probability. This corresponds to
a result of Spencer and Subramanian on the Erd\H os--R\'enyi random digraph model \cite{SS2008}. Along the way, we derive some related results and propose some conjectures. An example of this is an analogue of the theorem of Bondy which bounds the chromatic number of a graph by the circumference of any strong orientation.
\end{abstract}

\noindent{\small Keywords: \textit{digraphs, acyclic colorings, acyclic sets}}

\section{Introduction}

A graph is \emph{simple} if it has no loops or parallel edges. In this paper, our graphs will always be simple. Similarly, a digraph is \emph{simple} if it has no loops or parallel arcs. We are mostly interested in digraphs that are simple. Sometimes, we shall require a digraph to be an \emph{oriented graph}, meaning that directed cycles of length two (called \emph{digons}) are also forbidden.
Digraphs are usually denoted by $D=(V,A)$, where $V$ is the set of vertices and $A$ is the set of arcs.
A subset $S$ of vertices of a digraph $D$ is called \emph{acyclic} if the induced
subdigraph on $S$ contains no directed cycle. We denote by
$\vec{\alpha}(D)$ the maximum size of an acyclic set in $D$. 
The \emph{dichromatic number} 
$\vec{\chi}(D)$ of $D$ is the smallest integer $k$ such that
$V(D)$ can be partitioned into $k$ sets $V_1,\ldots, V_k$ where each $V_i$ is acyclic. 
Note that, equivalently, the dichromatic number is the smallest integer $k$, 
such that the vertices of $D$ can be colored
with $k$ colors so that there is no monochromatic directed cycle.
It is easy to see that, for any undirected graph $G$ and 
its bidirected digraph $D$ obtained from $G$ by replacing each edge by 
two oppositely oriented arcs, we have $\chi(G) = \vec{\chi}(D)$, where
$\chi(G)$ is the \emph{chromatic number} of $G$---the minimum number of colors
needed to color the vertices of $G$ such that no two adjacent vertices have the same color.
The dichromatic number was first introduced by Neumann-Lara~\cite{N1982}.

In recent years, there has been considerable attention devoted to this topic,
and many results have demonstrated that this digraph invariant generalizes
many results on the graph chromatic number 
(see, 
for example~\cite{hajosore, BHL2018, BFJKM2004,HM11c,
HM2011}).
Some evidence of 
this surprising relationship includes the
generalization of Gallai's classical theorem on list coloring
to digraphs in \cite{HM2011}, the extension 
of the important result of Erd\H{o}s that 
sparse graphs can have large chromatic 
number to digraphs in \cite{BFJKM2004}, 
the derivation of an analogue of a classical result due to 
Bollob\'as in \cite{HM11c}, etc. 

In the present paper, we 
study the existence of large acyclic sets in general digraphs
as well as 
bounds on the dichromatic number.
We use $n$ for the number of vertices and $m$ for the number of edges or arcs.
The following two conjectures motivated our results.
The first conjecture
is due to Aharoni, Berger and Kfir. 

\begin{conjecture}\label{conj:ABK}\emph{\cite{ABK2008}}
If $D$ is an oriented graph 
 then
\[ \vec{\alpha}(D) \geq (1+o(1))\,\frac{n^2}{m} \log_2 \frac{m}{n}.
\] 
\end{conjecture}

For an $n$-vertex tournament $D$ we have $m = n(n-1)/2$, so for large tournaments the conjecture says that
\[ \vec{\alpha}(D) \geq (2+o(1)) \log_2 n \;\; \mbox{ as } \; n \to\infty.\]
This is known up to a factor of $2$; every $n$-vertex tournament contains a transitive subtournament of order $\left\lfloor\log_2 n\right\rfloor +1$. Whether the factor $2$ can indeed be gained is a major open problem which was already discussed by Erd\H os and Moser~\cite{EM1964}.

An equivalent way of stating Conjecture~\ref{conj:ABK} is that if $D$ has average outdegree $d^+=\frac{m}{n}$, then
\[ \vec{\alpha}(D) \geq (1+o(1))\,  n \log_2 d^+ \, / d^+ . \]
We note that the meaningful interpretation of the term $o(1)$ is for $d^+$ large: if, for a digraph $D$, $\vec\alpha(D) \leq (1-\varepsilon) n \log_2 d^+ /d^+$ (which is the case, for example, for the Paley tournament on seven vertices~\cite{EM1964}, or some other specific orders~\cite{S1998}), then the disjoint union of any number of copies of $D$ will also satisfy this inequality.

On the other hand, if digons are permitted,
then clearly one cannot hope to find an acyclic set of size greater than $\Theta(n/d^+)$;
this directly follows from the fact that in a symmetric digraph an acyclic set corresponds
to an independent set of the underlying graph. The extra logarithmic factor can only appear
if one considers digraphs of digirth at least three. Interestingly, in the context of undirected graphs, a similar
phenomenon occurs with independent sets.
\begin{theorem}\emph{\cite{AKS1980}} 
Every triangle-free graph $G$ of average degree $d$ has
$\alpha(G) \geq \frac{n \ln d}{100 d}$.
\end{theorem}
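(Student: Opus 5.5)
The plan is to follow the Ajtai--Koml\'os--Szemer\'edi approach in the cleaner form due to Shearer, via the potential function
\[ f(d)=\frac{d\ln d-d+1}{(d-1)^2}, \]
aiming at $\alpha(G)\ge n f(d)$. Since $f(d)\ge \ln d/(100d)$ once $d\ge 2$ (indeed $f(d)\sim \ln d/d$), the stated bound follows from this, and for small $d$ it also follows from Tur\'an/Caro--Wei, $\alpha(G)\ge n/(d+1)$. Three properties of $f$ are needed: it is positive, decreasing and convex on $[0,\infty)$, and it satisfies the differential identity
\[ (d+1)f(d)=1+(d-d^2)f'(d). \]
Both the convexity and the identity are checked by routine calculus.

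The main step is an induction on $n$, with the following greedy choice. For a vertex $v$ of degree $d_v$, delete $v$ together with its neighbourhood $N(v)$. Because $G$ is triangle-free, $N(v)$ is an independent set. Let $s_v=\sum_{u\in N(v)} d_u$. The edges that disappear are exactly those incident to $N(v)\cup\{v\}$, and since $N(v)$ is independent there are precisely $s_v$ of them. The remaining graph $G'$ therefore has $n'=n-d_v-1$ vertices and average degree
\[ d'=\frac{nd-2s_v}{n-d_v-1}. \]
By induction, $\alpha(G)\ge 1+n' f(d')$. It then suffices to find some $v$ with
\[ 1+n'f(d')\ge n f(d). \]

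To find such a $v$, use that $f$ is convex, so it lies above its tangent at $d$:
\[ f(d')\ge f(d)+(d'-d)f'(d). \]
Plugging this in, the required inequality becomes
\[ 1-(d_v+1)f(d)+\bigl(d(d_v+1)-2s_v\bigr)f'(d)\ge 0 \qquad\text{(note } f'\le 0\text{)}. \]
Average this over a uniformly random $v$. Two facts give the averages:
\[ \frac1n\sum_v d_v=d, \qquad \frac1n\sum_v s_v=\frac1n\sum_u d_u^2\ge d^2 \]
(the second by Cauchy--Schwarz). The averaged expression is then at least
\[ 1-(d+1)f(d)+(d^2+d-2d^2)f'(d)=1-(d+1)f(d)+(d-d^2)f'(d), \]
where the inequality step uses $f'\le 0$ with the bound $\sum_v s_v\ge nd^2$. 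By the differential identity this last quantity equals $0$. Hence some vertex $v$ satisfies the required inequality, which closes the induction. The base cases, the empty graph and $d=0$, are trivial because $f(0)=1$.

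The main obstacle is deriving and verifying the function $f$ with exactly the right identity. I would obtain it by solving the ODE $(d+1)f=1+(d-d^2)f'$ and then checking convexity. A second point of care is the degenerate cases, such as $n'=0$ or $d'$ outside the regime where the tangent bound is applied. If one only wants the weaker constant $1/100$, an alternative is the original AKS random-deletion argument, but Shearer's induction is the most economical route.
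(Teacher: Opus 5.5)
The paper gives no proof of this statement. It is quoted from Ajtai--Koml\'os--Szemer\'edi as background, followed only by the remark that Shearer improved the constant to $1+o(1)$. Your proposal is correct, and it is exactly Shearer's argument, so it proves the stronger bound $\alpha(G)\ge nf(d)$ that the paper alludes to.

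The key steps all check out:
\begin{itemize}
\item Triangle-freeness makes $N(v)$ independent, so exactly $s_v$ edges are lost.
\item The tangent-line bound reduces the induction step to an expression linear in $d_v$ and $s_v$.
\item Averaging with $\sum_v s_v=\sum_u d_u^2\ge nd^2$ and $f'(d)\le 0$ gives the lower bound $1-(d+1)f(d)+(d-d^2)f'(d)$.
\item This vanishes by the identity $(d+1)f=1+(d-d^2)f'$, which does hold for this $f$.
\end{itemize}

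The original AKS proof is an iterated random selection-and-deletion procedure, an early instance of the semi-random method. Compared with it, your route is short and deterministic (effectively a greedy algorithm), and it gives the constant $1+o(1)$. The semi-random approach is messier and loses in the constant, but it is more flexible; for example, it extends to uncrowded hypergraphs.

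A few loose ends are worth tightening.
\begin{itemize}
\item The asymptotic $f(d)\sim\ln d/d$ does not by itself give $f(d)\ge\ln d/(100d)$ for every $d\ge 2$. However, $(d-1)^2\le d^2$ gives $f(d)\ge(\ln d-1)/d$ for $d\ge 1$, which suffices for $d\ge 3$. For $2\le d\le 3$, use $f(d)\ge f(3)>0.3>\ln d/(100d)$. For $d\le 1$ the bound is trivial, so Caro--Wei is not needed at all.
\item Convexity deserves a written proof. Differentiating the identity gives $d(1-d)f''=f+3df'$. One computes $(d-1)^3(f+3df')=g(d):=(5d+1)(d-1)-2d(d+2)\ln d$. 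The function $g$ and its first three derivatives vanish at $d=1$, and $g'''(d)=4(1-d)/d^2$, so $g<0$ for $d\neq 1$. Hence $f''=-g/\bigl(d(d-1)^4\bigr)>0$.
\item The degenerate cases you flag are harmless. Always $d'\ge 0$, and for $d>0$ the tangent inequality holds on all of $[0,\infty)$. If $n'=0$, then $s_v=nd/2$, so with the convention $n'f(d')=0$ the reduced inequality still holds, with equality in the tangent step.
\end{itemize}
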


Shearer~\cite{S1983} improved the constant $1/100$ to $1+o(1)$ (as $d\rightarrow\infty$).
Later, Johansson~\cite{J1996,MR2002} addressed the coloring version of the problem.
\begin{theorem}
There is a constant $c$ such that, for any triangle-free graph $G$ with maximum degree $\Delta\geq 2$, $\chi(G) \leq c \Delta/\ln \Delta$.
\end{theorem}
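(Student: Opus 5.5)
The plan is to use the nibble / semi-random method for coloring triangle-free graphs, in the form developed by Johansson and presented in Molloy--Reed \cite{MR2002}. Each vertex keeps a list of available colors and we run many rounds. In each round, every uncolored vertex is activated with a small probability and then picks a tentative color uniformly from its current list. A vertex keeps its tentative color if no neighbour chose the same color; in addition, a color is removed from a vertex's list whenever a neighbour retains it. With this rule the coloring is always proper.

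The aim is to track two parameters over the rounds:
\begin{itemize}
\item $L$, the list sizes;
\item $T$, for each vertex $v$ and color $c$, the number of uncolored neighbours of $v$ that still have $c$ in their list.
\end{itemize}
We start with $L=c\Delta/\ln\Delta$ and $T\le \Delta$. We want to show that the ratio $T/L$ decreases geometrically until it reaches a constant. At that point the remaining uncolored graph can be finished greedily, or by a Lovász Local Lemma argument, since each color is then ``demanded'' by few neighbours.

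Triangle-freeness enters in the variance and independence computations. Because the neighbours of $v$ form an independent set, whether they retain their colors depends on disjoint neighbourhoods, apart from $v$ itself. So the losses from $v$'s list behave like independent events, and $L$ shrinks only by a factor of about $e^{-T/L\cdot(\text{activation prob.})}$. Meanwhile $T$ shrinks faster, because neighbours get colored or lose $c$.

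Concentration of $L$ and $T$ in each round comes from Talagrand's inequality (or a bounded-differences inequality) applied locally. We then use the Local Lemma to ensure every vertex has its parameters close to their expectations simultaneously; this uses the bounded dependency degree $\mathrm{poly}(\Delta)$. To keep $L$ uniform across vertices, we use equalizing coin flips: artificially remove colors so that every list has exactly the expected size. This keeps the recursion clean.

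Running the recursion for about $\ln\Delta\cdot(\text{activation prob.})^{-1}$ rounds gives $T\le L/10$ while $L\ge \Delta^{\Omega(1)}$ is still large, provided $c$ is a large enough constant. Since $\Delta\ge 2$, small $\Delta$ is absorbed into the constant using $\chi\le\Delta+1$.

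The main obstacle is the per-round concentration for $T$. The variable $T$ depends on events two steps away and is not Lipschitz in the naive sense, so it needs Talagrand's inequality with certificates, plus careful handling of the rare event that many neighbours choose the same color. Keeping the error terms summable over $\Theta(\ln\Delta)$ rounds is the delicate bookkeeping. I would follow Molloy--Reed's Chapter 13 structure directly.
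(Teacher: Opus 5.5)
The paper does not prove this theorem; it quotes it from Johansson and Molloy--Reed. A nibble is the right kind of argument, but your sketch uses the hypothesis in the wrong place, and as a result the parameter you track cannot behave as you claim. Triangle-freeness says only that $N(v)$ is independent, i.e.\ adjacent vertices have no common neighbour. It does \emph{not} say that the neighbours of $v$ have disjoint neighbourhoods apart from $v$. That is the girth-five condition of Kim's theorem, and it fails completely once $4$-cycles are allowed: in $K_{\Delta,\Delta}$ all neighbours of $v$ have the same neighbourhood. Moreover, the conclusion you draw from it (that $|L(v)|$ shrinks by only about $e^{-pT/L}$, with $p$ the activation probability) holds in every graph, since each colour is lost with probability at most $pT/L$ by a union bound. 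The step that genuinely needs the hypothesis, that $T$ falls faster than $L$, is supported only by a reason (`neighbours get coloured or lose $c$') that applies verbatim to $K_{\Delta+1}$.

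With $T$ bounding every individual $t(v,c)$, that step is in fact false. Take $K_{\Delta,\Delta}$ with all lists equal. Under your rule, $c$ leaves the lists of \emph{all} neighbours of $v$ at once if and only if some vertex on $v$'s own side retains $c$. So, up to the negligible effect of equalization, $t_{i+1}(v,c)$ is either $0$ or the full number of uncoloured neighbours of $v$, and the latter holds for most $c\in L_{i+1}(v)$. Hence $\max_c t(v,c)$ decreases only by the factor $1-p\,\PP(\text{retain})$, while $|L(v)|$ decreases by more. At the start, where $T/L\approx(\ln\Delta)/c\gg 1$, your ratio $T/L$ therefore goes \emph{up}. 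No Talagrand-type argument can repair this: the per-colour count is genuinely bimodal, not merely hard to concentrate.

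What is missing is the actual use of triangle-freeness: for an edge $uv$ we have $N(u)\cap N(v)=\emptyset$. Suppose a colour is deleted from $L(u)$ whenever a neighbour of $u$ is merely \emph{assigned} it (the wasteful rule). Then the survival of $c$ in $L(u)$ and in $L(v)$ is decided on disjoint sets of random choices. Under your non-wasteful rule even this is lost, because whether $v$ retains $c$, which affects $L(u)$, depends on the choices made in $N(v)$.

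This independence controls the \emph{expected} colour degrees, but by the example above no per-colour bound can be maintained. The argument has to track colour degrees only in aggregate over colours, for instance $\sum_u |L(u)\cap L(v)|$ over uncoloured $u\in N(v)$, or a weighted version with colour probabilities at each vertex. Unlike the individual $t(v,c)$, such aggregates can concentrate. Per-colour bounds are then recovered only at the very end. For example, discard from each $L(v)$ the colours whose degree exceeds twice the average (at most half of them, by Markov). Then apply the Local Lemma fact that lists of size $\ell$, in which each colour lies in at most $\ell/8$ neighbouring lists, admit a proper colouring. A greedy completion, which you offer as an alternative, does not suffice. Setting up and propagating such an aggregate invariant is the real content of the triangle-free case, and your outline does not address it.
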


Also in this case, the constant has been brought to {$1+o(1)$ (as $\Delta\rightarrow\infty$)} \cite{M2019}.
For oriented graphs, Erd\H{o}s and Neumann-Lara~\cite{ErdosNL} conjecture that the analogue of Johansson's theorem
(and an extension of Conjecture~\ref{conj:ABK}) should hold: 
one can color $D$ with $O(\Delta/ \log\Delta)$ colors such that each
color class is acyclic.

The next conjecture that motivated our paper is the
following one:
\begin{conjecture}
For every tournament $H$, there is $\varepsilon_H > 0$ such that 
any $H$-free tournament $T$ on $n$ vertices satisfies $\vec{\alpha}(T) \geq n^{\varepsilon_H}$. 
\end{conjecture}

(In this paper, by an \emph{$H$-free digraph} $D$ we will mean a digraph $D$ which has no subdigraph isomorphic to $H$.) Alon, Pach and Solymosi~\cite{APS2001} proved that the conjecture above is equivalent to the Erd\H{o}s--Hajnal conjecture,
one of the central questions in
extremal graph theory.

\begin{conjecture}\label{conj:EH}{\emph{\cite{EH1989}}} For every graph $H$, there is $\varepsilon_H>0$ such that any graph $G$ on $n$ vertices which does not have $H$ as an induced subgraph satisfies $\max\{\alpha(G),\omega(G)\}\geq n^{\varepsilon_H}$.
\end{conjecture}

The structure of the paper is as follows. In Section 2, we prove a theorem on digraph coloring which is reminiscent
of a well-known theorem of Bondy. The results of this
section give an upper bound on the dichromatic number when the largest cycle of a digraph has
bounded length. 
In Section~3, we propose a strengthening of the Erd\H{o}s--Hajnal conjecture, motivating it with several special cases.
{Lastly, Section~4 concerns 
random digraphs: we show that, up to a multiplicative constant, random $r$-regular oriented graphs satisfy the Aharoni--Berger--Kfir conjecture with high probability. Moreover, we also give an upper bound, which essentially differs from the lower bound by a factor of 4.}

\section{Digraph Coloring and Bondy's Theorem}

By the \emph{circumference} of a digraph $D$, we mean the maximum length of a directed cycle (when we talk about the circumference of $D$, we assume that $D$ has a directed cycle).
Bondy proved the following classical theorem.

\begin{theorem}\label{thm:Bondy}\emph{\cite{B1976}}
 Let $G$ be a graph and $D$ an orientation of $G$ which is strongly connected.
 Suppose that $D$ has circumference $s$. Then $\chi(G) \leq s$. 
\end{theorem}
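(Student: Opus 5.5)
The plan is to color $G$ greedily along a structure extracted from a spanning out-tree of $D$. We argue by induction on $|V(G)|$ in the usual Bondy/Gallai--Roy style, using the depth-first-search tree to control back arcs.

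Since $D$ is strongly connected, fix a vertex $r$ and take a depth-first-search out-branching $T$ of $D$ rooted at $r$. For each vertex $v$, let $\ell(v)$ be the depth of $v$ in $T$, with $\ell(r)=0$. We color $v$ with $\ell(v) \bmod s$ and must show this is a proper coloring of $G$.

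The key property of a DFS tree in a digraph is that every arc $uv$ not in $T$ goes either from a vertex to one of its descendants (a forward arc), or to an ancestor (a back arc), or to a vertex visited earlier in a different branch (a cross arc). However, two vertices with equal colour can be adjacent across branches, so the modular-depth coloring fails for cross arcs. I therefore switch to a more robust route via Gallai--Roy: it suffices to find an acyclic spanning subdigraph $H$ of $D$ whose longest directed path has at most $s$ vertices, such that every edge of $G$ is an arc of $H$ or joins two vertices at equal height in $H$, with heights forming independent sets in $G$.

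The cleaner approach I will actually follow is induction via ear decompositions. A strong digraph has a directed ear decomposition $D_0=C, D_1,\dots$, where each ear is a directed path or cycle attached at its endpoints. Each ear $P=x\,p_1\cdots p_k\,y$, together with a directed path from $y$ back to $x$ in the current strong subdigraph, forms a directed cycle. Hence the number of internal vertices $k$ plus the $y$--$x$ distance is at most $s$. We maintain a coloring $c\colon V\to \mathbb{Z}_s$ satisfying the invariant $c(w)=c(u)+1$ for every arc $uw$ of the current subdigraph. For the initial cycle this is possible precisely when the cycle length is $s$; for a shorter cycle the invariant must be relaxed to $c(w)-c(u)\in\{1,\dots,s-1\}$ along arcs, applied to a suitable "potential".

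The precise invariant will be: there is a potential $h\colon V\to\mathbb{Z}$ with $1\le h(w)-h(u)$ along tree arcs, and every arc $uw$ satisfies $h(w)-h(u)\not\equiv 0 \pmod s$. Any such $h$ yields the proper coloring $h \bmod s$, since the condition on every arc says exactly that its two endpoints get different colors.

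I take $h(v)$ to be the length of the longest directed path from $r$ to $v$ in the DFS tree order. Bounding differences by $s-1$ is the main obstacle. The claim to prove is that for every arc $uw$ with $h(w)\ne h(u)$, we have $0<|h(w)-h(u)|<s$ after an appropriate choice of $h$ (namely, tree depth). For a back arc $uw$, $w$ is an ancestor of $u$, so the tree path from $w$ to $u$ plus the arc is a cycle of length $\ell(u)-\ell(w)+1\le s$, giving $1\le \ell(u)-\ell(w)\le s-1$. For a forward arc from $u$ to a descendant $w$, strong connectivity gives a path from $w$ back into the ancestors of $u$. Combining that path with the tree path yields a cycle containing the tree segment, which I expect to bound $\ell(w)-\ell(u)\le s-1$. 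Cross arcs, which go to earlier branches, are handled the same way by routing through the lowest common ancestor and using strong connectivity to close a cycle. Making this closing-cycle argument rigorous is where I expect the real work to lie.
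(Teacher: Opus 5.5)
The paper does not prove this theorem; it only cites Bondy. So your proposal has to stand on its own, and it has a genuine gap. The coloring you finally commit to, $h(v)=\ell(v)\bmod s$ with $\ell$ the depth in a DFS out-branching, is not a proper coloring of $G$ in general.

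Your key claim only concerns arcs with $h(w)\neq h(u)$. The danger is precisely arcs whose endpoints have equal depth, and cross arcs can have that. Take vertices $r,a,b,c$ with arcs $r\to a$, $r\to b$, $a\to b$, $b\to c$, $c\to r$. This is a strong orientation of a simple graph with circumference $s=4$. A DFS from $r$ that explores $b$ before $a$ produces the tree $r\to b\to c$, $r\to a$. Then $a$ and $b$ both have depth $1$, and the cross arc $a\to b$ is monochromatic. ``Routing through the lowest common ancestor'' cannot repair this, because the problem is not the size of the depth difference but that it vanishes.

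Your forward-arc bound $\ell(w)-\ell(u)\le s-1$ is also false, because DFS depth is not controlled by the circumference. Take the path $u=t_0\to t_1\to\cdots\to t_{20}=w$ together with the arcs $u\to w$, $w\to z\to t_{10}$ and $t_{10}\to z'\to u$. This digraph is strong, and its only cycles have $12$, $12$ and $5$ vertices, so $s=12$. Yet the DFS from $u$ that follows the path puts $w$ at depth $20$. So the closing-cycle argument you hoped to make rigorous cannot be made rigorous. The ear-decomposition sketch in the middle of your proposal is dropped before any invariant is verified, so it does not fill the gap either.

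What is missing is a labelling that controls all arcs of $D$ at once, not only arcs between tree-comparable vertices. Your idea of a potential whose level sets are stable is the right shape; tree depth just does not achieve it. In this paper's toolkit, Lemmas~\ref{lem.Bessy} and~\ref{Bessy} supply exactly such a labelling:
\begin{itemize}
\item Take a longest cycle $C$, of length $s$.
\item Take a coherent cyclic order in which $C$ is simple, obtained by minimizing the total cycle index (Lemma~\ref{lem.Bessy}).
\item Lemma~\ref{Bessy} then gives an enumeration split into $s$ consecutive intervals, each a stable set of $D$ and hence an independent set of $G$. This gives $\chi(G)\le s$ immediately.
\end{itemize}
This is essentially how Bessy and Thomass\'e rederive Bondy's theorem. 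The global property your DFS depth lacks is coherence: every backward arc $v_jv_i$ has a forward $v_i$--$v_j$ path. The interval index then plays the role of the potential $h$ you were looking for.
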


A \emph{$k$-list-assignment} $L$ to a digraph $D$ is an assignment $L:V(D)\rightarrow\mathcal P(\mathbb Z^+)$ of sets of positive integers to the vertices of $D$ such that $|L(v)|\geq k$ for every vertex $v$. $D$ is \emph{$L$-colorable} if $V(D)$ can be partitioned into acyclic sets $V_1,\ldots,V_s$ so that, for every vertex $v$, $v\in\cup_{i\in L(v)} V_i$. Let $\vec{\chi}_{\ell}(D)$ denote the \emph{list dichromatic number} of $D$, that is, the minimum $k$ such that $D$ is $L$-colorable for any $k$-list-assignment to $D$.
List colorings of digraphs were introduced in \cite{HM2011} and were later studied in
\cite{BHL2018}.
\begin{theorem} \label{thm.cycle-lengths}
Let $D$ be a simple
digraph in which all (directed) cycle lengths belong to a set $K$ with $|K| = k$. 
Then $\vec{\chi}_{\ell}(D) \leq k+1$; and indeed, given any corresponding lists of size $k+1$, 
we can find a list acyclic coloring in polynomial time.
\end{theorem}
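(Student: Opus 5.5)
The plan is to show that every such digraph has a vertex of small out-degree, and then colour greedily, peeling off one such vertex at a time (a degeneracy-type argument). The hypothesis is hereditary: every subdigraph of $D$ is again simple, and all its cycle lengths lie in $K$. So it suffices to prove the following key lemma and induct on $|V(D)|$.

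\textbf{Key lemma.} Every simple digraph $D$ whose cycle lengths all lie in a set $K$ with $|K|=k$ has a vertex of out-degree at most $k$.

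To prove it, take a \emph{maximal} directed path $P=v_0v_1\cdots v_t$, meaning one that cannot be extended at its end. We do not need a longest path, and avoiding that keeps everything polynomial. By maximality, every out-neighbour of $v_t$ lies on $P$. If $v_tv_i$ is an arc with $i<t$, then $v_iv_{i+1}\cdots v_tv_i$ is a directed cycle of length $t-i+1$. Distinct out-neighbours $v_i$ give distinct lengths. Since $D$ is simple, there are no loops and no parallel arcs, so each out-neighbour corresponds to exactly one arc. Hence the number of out-neighbours of $v_t$ is at most the number of distinct cycle lengths, which is at most $k$. Such a path is found greedily: start anywhere and keep stepping to an out-neighbour not yet on the path.

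\textbf{Colouring step.} Let $L$ be any assignment of lists of size $k+1$. Pick $v=v_t$ as in the lemma and $L$-colour $D-v$ by induction. Then give $v$ a colour from $L(v)$ not used on any of its at most $k$ out-neighbours; such a colour exists because $|L(v)|\ge k+1$. Suppose some monochromatic directed cycle $C$ appeared. It cannot lie inside $D-v$, so it must pass through $v$. Its successor of $v$ on $C$ is an out-neighbour of $v$ and has a different colour, a contradiction. So every colour class is acyclic and $D$ is $L$-colourable, giving $\vec{\chi}_\ell(D)\le k+1$.

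\textbf{Running time.} Unrolled, the induction repeatedly finds a maximal path in the current digraph and removes its end vertex. This produces an ordering of the vertices; we then colour them in reverse order of removal, each in linear time. Finding a maximal path and the colour choices are both linear per step, so the whole procedure is polynomial.

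\textbf{Main obstacle.} The only real content is the lemma. The point to get right is that a maximal (not necessarily longest) path suffices, which is what yields the algorithmic claim. Simplicity is used exactly to ensure that out-neighbours correspond injectively to cycle lengths.
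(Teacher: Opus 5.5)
Your proof is correct and follows essentially the same route as the paper: a path argument showing that every subdigraph has a vertex of degree at most $k$ on one side, followed by greedy list colouring along the resulting degeneracy order. The paper uses the in-degree of the first vertex of a longest path where you use the out-degree of the last vertex of a maximal path. The paper's algorithm never constructs the path at all; it just scans degrees for a vertex of in- or out-degree at most $k$. So polynomiality holds either way, and your maximal-path observation simplifies the existence argument rather than being what makes the algorithm work.
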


We note that both Theorem \ref{thm:Bondy} and Theorem 
\ref{thm.cycle-lengths} are tight; indeed, $K_n$ and the bidirected complete digraph on $n$ vertices, respectively, serve as examples. 
The following 
corollary of Theorem \ref{thm.cycle-lengths} is in the spirit 
of Bondy's result (Theorem~\ref{thm:Bondy}). 
\begin{corollary}\label{cor:Bondy-like}
 Let $D$ be an oriented graph with circumference 
 $s\geq 3$. 
 Then $\vec{\chi}_{\ell}(D) \leq s-1$.
\end{corollary}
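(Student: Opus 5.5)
Since $D$ is an oriented graph, it has no digons, so every directed cycle has length at least $3$. Its circumference is $s$, so every cycle length lies in $K=\{3,4,\ldots,s\}$, and $|K| = s-2$. We then apply Theorem~\ref{thm.cycle-lengths} with $k=s-2$. This gives $\vec{\chi}_{\ell}(D)\leq k+1 = s-1$, which is the claim.

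The two hypotheses of Theorem~\ref{thm.cycle-lengths} need checking. First, $D$ must be simple. An oriented graph is by definition a simple digraph with digons also forbidden, so this holds. Second, all cycle lengths must lie in a set of size $k$. Here $K$ has exactly $s-2$ elements. If some lengths in $K$ do not actually occur, the hypothesis still holds: the theorem only requires the lengths to belong to a set of size $k$. Alternatively one can use the smaller set of lengths that do occur, which gives an even better bound.

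The assumption $s\geq 3$ guarantees that $D$ has a directed cycle, so that the circumference is defined, and that $k\geq 1$. No real obstacle arises. The only point needing care is that the length $2$ is excluded because $D$ is digon-free; this exclusion is exactly what improves the bound from $s$ to $s-1$ compared with a naive count over $\{2,\ldots,s\}$. The polynomial-time statement also carries over from Theorem~\ref{thm.cycle-lengths}.
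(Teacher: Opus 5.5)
Your proof is correct and is exactly the paper's argument: since $D$ is digon-free, all cycle lengths lie in $\{3,\ldots,s\}$, a set of $s-2$ values, so Theorem~\ref{thm.cycle-lengths} with $k=s-2$ gives $\vec{\chi}_{\ell}(D)\leq s-1$. One small remark: $s\geq 3$ does not guarantee that a cycle exists; rather, it holds automatically once the circumference is defined, because an oriented graph has no cycles of length less than $3$.
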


To deduce the corollary, observe that in $D$ there are at most $s-2$ distinct cycle lengths.
The theorem follows from the next three easy lemmas. A digraph $D$ is $k$-\emph{degenerate} 
if in each subdigraph (including $D$ itself) there is a vertex with indegree or outdegree at most $k$. 
A $k$-\emph{degeneracy order} for $D$ is a listing of the vertices as $v_1,\ldots,v_n$ such that 
for each $j=2,\ldots,n$ vertex $v_j$ 
has indegree or outdegree at most $k$ in the subdigraph of $D$ induced on $\{v_1,\ldots,v_j\}$.

\begin{lemma}
Let $K$ be a set of $k$ positive integers, 
and let $D$ be a simple digraph in which all (directed) cycle lengths are in $K$. Then $D$ is $k$-degenerate.
\end{lemma}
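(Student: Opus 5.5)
The plan is a longest-path argument. Cycle lengths only shrink under taking subdigraphs: every cycle of a subdigraph $D'$ of $D$ is a cycle of $D$, so $D'$ again has all its cycle lengths in $K$, and $D'$ is simple. It therefore suffices to prove one claim for an arbitrary nonempty simple digraph $D$ whose cycle lengths all lie in $K$: \emph{$D$ has a vertex of outdegree at most $k$.} This is even slightly stronger than what $k$-degeneracy requires, since we only need indegree or outdegree at most $k$.

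We argue by contradiction. Suppose every vertex of $D$ has outdegree at least $k+1$. Take a longest directed path $P = v_0 v_1 \cdots v_\ell$ in $D$; it exists because $D$ is finite and nonempty. Every out-neighbour of the last vertex $v_\ell$ must lie on $P$, since otherwise we could extend $P$, contradicting maximality. Because $D$ is simple (no loops, no parallel arcs), $v_\ell$ has at least $k+1$ \emph{distinct} out-neighbours. These are vertices $v_{i_1},\dots,v_{i_{k+1}}$ with $0 \le i_1 < \cdots < i_{k+1} \le \ell-1$.

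Each such arc $v_\ell v_{i_j}$ closes a directed cycle $v_{i_j} v_{i_j+1} \cdots v_\ell v_{i_j}$ of length $\ell - i_j + 1 \ge 2$. These $k+1$ lengths are pairwise distinct because the indices $i_j$ are distinct. So $D$ has at least $k+1$ distinct cycle lengths, contradicting $|K| = k$. Hence some vertex has outdegree at most $k$. Applying this to every subdigraph shows that $D$ is $k$-degenerate.

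I do not expect a real obstacle here. The one point needing care is that simplicity is exactly what guarantees $k+1$ \emph{distinct} out-neighbours and forbids length-$1$ cycles. Digons are allowed, and they simply contribute the length $2$, which is fine. The argument is also constructive: repeatedly removing a vertex of small outdegree yields a $k$-degeneracy order in polynomial time, which is what the algorithmic part of Theorem~\ref{thm.cycle-lengths} will need.
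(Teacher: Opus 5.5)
Your proposal is correct and is essentially the paper's proof: the paper also takes a longest path $P$ in an arbitrary subdigraph, but bounds the indegree of the first vertex $v_1$ (an in-neighbour $v_j$ closes a cycle of length $j$, so $j\in K$), whereas you bound the outdegree of the last vertex. The two arguments are mirror images and give the same bound of $k$.
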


\begin{proof}
Let $D'$ be a subdigraph of $D$.
Let $P = (v_1,v_2, v_3,\ldots)$ be a longest (directed) path in~$D'$.  If 
$v_1$ has an in-neighbor $w$ in $D'$ then $w$ must lie on $P$, and $w$ can only be
a vertex $v_j$ for some $j \in K$.  Thus $v_1$ has indegree at most~$k$, completing the proof.
\end{proof}

\begin{lemma}\label{lem:degeneracy_order}
Let the digraph $D$ be $k$-degenerate.  Then we can find a $k$-degeneracy order for $D$ in polynomial time.
\end{lemma}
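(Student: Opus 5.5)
We build the order greedily from the end, peeling off a vertex of small in- or outdegree from the current remaining digraph. Let $D_n = D$. For $j = n, n-1, \ldots, 2$, suppose $D_j$ is an induced subdigraph of $D$ on $j$ vertices. Since $D$ is $k$-degenerate, $D_j$ contains a vertex whose indegree or outdegree in $D_j$ is at most $k$. Find such a vertex by scanning all vertices of $D_j$ and computing their in- and outdegrees. Call it $v_j$, and set $D_{j-1} = D_j - v_j$. When only one vertex remains, call it $v_1$.

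Correctness follows from two facts. First, each $D_j$ is induced by the set $\{v_1,\ldots,v_j\}$: by construction $D_j$ is $D$ with $v_n,\ldots,v_{j+1}$ deleted, and deleting vertices from an induced subdigraph leaves an induced subdigraph. Second, $k$-degeneracy is hereditary. Every subdigraph of $D_j$ is also a subdigraph of $D$, so the required vertex exists at every step. Hence for each $j \ge 2$, the vertex $v_j$ has indegree or outdegree at most $k$ in $D[\{v_1,\ldots,v_j\}]$, which is exactly the definition of a $k$-degeneracy order.

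For the running time, there are $n-1$ iterations. Each iteration computes degrees in $O(n+m)$ time, or $O(n^2)$ with an adjacency matrix, so the total is polynomial. One can get linear time by maintaining degree counters and buckets, but this is not needed.

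There is essentially no obstacle here. The only subtlety is that we do not need the value of $k$ to certify anything beyond its use as a threshold: the algorithm takes $k$ as input and uses it when choosing each vertex. If $k$ were not given, we could instead pick at each step a vertex minimizing $\min(d^-,d^+)$. Because $D$ is $k$-degenerate, this minimum is always at most $k$, so the same argument goes through.
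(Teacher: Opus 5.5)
Your proposal is correct and follows essentially the same argument as the paper: repeatedly remove a vertex whose in- or outdegree in the remaining digraph is at most $k$, placing each removed vertex at the front of the list (your indexing $v_n, v_{n-1}, \ldots$ does exactly this). Your correctness argument, that each remaining digraph is a subdigraph of $D$ and so has such a vertex, is the reasoning the paper leaves implicit.
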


\begin{proof}
Compute the indegree and outdegree of each vertex.
Start with an empty list.  Repeatedly, choose a vertex $v$ with indegree or outdegree at most~$k$,
put $v$ at the start of the current list, and delete $v$ from~$D$ and update the remaining indegrees and outdegrees.
\end{proof}

\begin{lemma}\label{lem:colouring_degenerate}
Let the loop-free digraph $D$ 
have a given $k$-degeneracy order $v_1,\ldots,v_n$.  Then given any lists of size $k\!+\!1$, we can read off a list acyclic coloring $f$ in polynomial time.
\end{lemma}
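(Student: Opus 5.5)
Color the vertices greedily in the order $v_1,v_2,\ldots,v_n$. For $j\ge 2$, vertex $v_j$ has indegree at most $k$ or outdegree at most $k$ in $D_j := D[\{v_1,\ldots,v_j\}]$; fix such a direction. Suppose it is indegree; the outdegree case is symmetric. The in-neighbors of $v_j$ among $v_1,\ldots,v_{j-1}$ use at most $k$ colors. Since $|L(v_j)|\ge k+1$, some color $c\in L(v_j)$ appears on none of them, and we set $f(v_j)=c$. The vertex $v_1$ receives any color from its list. Each step only inspects the at most $n$ neighbors of $v_j$ and the list, and since the lists have size exactly $k+1$ (or we may truncate them to that size), the procedure runs in polynomial time.

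To show each color class is acyclic, we argue by induction on $j$ that the restriction of $f$ to $D_j$ has no monochromatic directed cycle. The base case is trivial: $D_1$ has no cycle because $D$ is loop-free. For the inductive step, take a monochromatic directed cycle $C$ in $D_j$. If $C$ avoids $v_j$, it lies in $D_{j-1}$, contradicting the induction hypothesis. Otherwise $C$ passes through $v_j$. Since $D$ is loop-free, $C$ has length at least $2$, so the predecessor of $v_j$ on $C$ is a vertex $u\ne v_j$ in $D_{j-1}$ with an arc $uv_j$. That makes $u$ an in-neighbor of $v_j$ in $D_j$, and because $C$ is monochromatic, $f(u)=f(v_j)$. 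This contradicts the choice of $f(v_j)$. In the outdegree case we use the successor of $v_j$ on $C$ instead.

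It follows that every color class $V_i=f^{-1}(i)$ is acyclic, $v\in V_{f(v)}$ with $f(v)\in L(v)$, and so $f$ is a list acyclic coloring. There is no real obstacle here. The only point needing care is that the degeneracy condition must be applied to the induced subdigraph on $\{v_1,\ldots,v_j\}$, so that only previously colored vertices matter. It is also essential that blocking a single direction (all in-neighbors, or all out-neighbors) is enough, since any directed cycle through $v_j$ uses both an in-arc and an out-arc at $v_j$.
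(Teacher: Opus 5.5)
Your proposal is correct and is essentially the paper's proof: the paper also colors greedily along the degeneracy order, avoiding the colors on the smaller of $N^-(v_j)$ and $N^+(v_j)$ among earlier vertices. Your induction on $j$ is the same as the paper's argument that looks at the largest-indexed vertex $v_j$ of a cycle $C$, which forces $C$ to contain an earlier neighbor of $v_j$ in the blocked direction.
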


\begin{proof}
For each $j=1,\ldots,n$ proceed as follows. 
Let $A_j$ be a smaller of 
the two sets $N^{-}(v_j) \cap \{v_1,\ldots,v_{j-1}\}$ and $N^{+}(v_j) \cap \{v_1,\ldots,v_{j-1}\}$, so $|A_j| \leq k$; and set $f(v_j)$ to be any color in $L(v_j) \setminus \{f(w): w \in A_j\}$.
No (directed) cycle $C$ is monochromatic: for if $j$ is the largest index such that $v_j$ is in $C$, then
$C$ must contain a vertex $w \in A_j$ and $f(v_j) \neq f(w)$.
\end{proof}
This completes the proof of 
Theorem~\ref{thm.cycle-lengths}.
We next show that for tournaments we can do much better than Theorem~\ref{thm.cycle-lengths} and Corollary~\ref{cor:Bondy-like}.
Let us note first a simple but useful lemma.
\begin{lemma}
For every digraph $D$, $\vec{\chi}(D)$ (resp.~$\vec{\chi}_{\ell}(D)$) equals the maximum value of $\vec{\chi}(D')$ (resp.~$\vec{\chi}_{\ell}(D')$) over the strongly connected components $D'$ of $D$.
\end{lemma}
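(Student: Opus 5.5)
We prove the two inequalities separately. The key fact is that every directed cycle of $D$ lies entirely within one strongly connected component: any two vertices on a directed cycle can reach each other along the cycle. Let $D_1,\ldots,D_t$ be the strong components, which are induced subdigraphs partitioning $V(D)$.

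\emph{Lower bound.} Each $D_i$ is an induced subdigraph of $D$. Restricting any acyclic coloring of $D$ (resp.\ any $L$-coloring) to $V(D_i)$ gives an acyclic coloring of $D_i$ (resp.\ an $L|_{V(D_i)}$-coloring), since a monochromatic cycle in $D_i$ would also be one in $D$. Hence $\vec\chi(D_i)\le\vec\chi(D)$. For lists, any $k$-list-assignment $L_i$ to $D_i$ extends to a $k$-list-assignment $L$ to $D$ by giving the other vertices arbitrary lists of size $k$. Taking $k=\vec\chi_\ell(D)$, $D$ is $L$-colorable, and restricting yields an $L_i$-coloring of $D_i$. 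So $\vec\chi_\ell(D_i)\le\vec\chi_\ell(D)$.

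\emph{Upper bound.} Let $k=\max_i\vec\chi(D_i)$ (resp.\ $\max_i\vec\chi_\ell(D_i)$), and let $L$ be a $k$-list-assignment to $D$. Color each $D_i$ independently, using colors $\{1,\ldots,k\}$ (resp.\ an $L|_{V(D_i)}$-coloring, which exists because $\vec\chi_\ell(D_i)\le k$). Take the union of these colorings. Suppose $C$ were a monochromatic directed cycle in $D$. By the key fact, $C$ lies inside a single component $D_i$, and since $D_i$ is induced it would be a monochromatic cycle in the coloring of $D_i$, a contradiction. So each color class is acyclic in $D$, which gives $\vec\chi(D)\le k$ (resp.\ $\vec\chi_\ell(D)\le k$).

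The only point needing care is the key fact that cycles do not cross components, and it is immediate. The rest is bookkeeping: for the list version, the lists on different components are handled independently, so no compatibility issue arises.
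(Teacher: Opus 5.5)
Your proof is correct and follows the same approach as the paper: color each strong component separately (from $[t]$, or from the lists), and use that every directed cycle lies inside a single component. The paper writes out only this upper bound and leaves the reverse inequality implicit; your restriction argument, including extending a list assignment from $D_i$ to all of $D$, simply makes that omitted direction explicit.
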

\begin{proof}
Let $t$ be the maximum value of $\vec{\chi}(D')$ over the strongly connected components $D'$ of $D$. We may use colors from $[t]$ to properly color each strongly connected component.  This gives a proper coloring of $D$, since each directed cycle is contained within one of the components.  The same proof works for list colorings.
\end{proof}
For each $n \in {\mathbb N}$, let $t(n)$ be the maximum value of $\vec{\chi}_{\ell}(T)$ for $T$ ranging over all $n$-vertex tournaments. Equivalently, $t(n)$ is the maximum value of $\vec{\chi}_{\ell}(D)$ for $D$ ranging over all $n$-vertex oriented graphs. From~\cite{EM1964} and~\cite{BHL2018} we have
\begin{equation} \label{eqn.tbound}
    (1/2+o(1))\, n / \log_2 n \leq t(n) 
\leq (1+o(1))\, n / \log_2 n \; \mbox{ as } n \to \infty\,
\end{equation} 
(the lower bound comes from random tournaments). Recall that a strongly connected tournament has a (directed) Hamilton cycle~\cite{C1959}; and thus for a tournament the circumference equals the maximum order of a strongly connected component. Hence, by the last lemma, if $T$ is a tournament of circumference at most $s$ then $\vec{\chi}_{\ell}(T) \leq t(s)$.
By~(\ref{eqn.tbound}) we now have:
\begin{theorem} \label{thm: tournaments}
 If $T$ is a tournament of circumference at most $s$ then
 \[ \vec{\chi}_{\ell}(T) 
 \; \leq  (1+o(1))\, s / \log_2 s 
  \; \mbox{ as } s \to \infty\,.  \]
\end{theorem}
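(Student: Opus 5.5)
The statement follows from three ingredients already in place: the component lemma, Camion's theorem~\cite{C1959}, and the upper bound in~(\ref{eqn.tbound}).

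\emph{Step 1: reduce to strong components.} Let $T$ be a tournament of circumference at most $s$. By the lemma above, $\vec{\chi}_{\ell}(T)$ equals the maximum of $\vec{\chi}_{\ell}(T')$ over the strongly connected components $T'$ of $T$. Each such $T'$ is itself a strongly connected tournament.

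\emph{Step 2: bound the size of each component.} A component $T'$ on a single vertex has $\vec{\chi}_{\ell}(T')=1$. Otherwise $T'$ has a Hamilton cycle by~\cite{C1959}. This cycle is a directed cycle in $T$, so $|V(T')|\leq s$. The quantity $t(n)$ is nondecreasing in $n$, since a tournament on fewer vertices is a subdigraph of a larger one and $\vec\chi_\ell$ is monotone under taking subdigraphs. Hence
\[ \vec{\chi}_{\ell}(T) \;\leq\; \max_{n\leq s} t(n) \;=\; t(s). \]

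\emph{Step 3: apply the asymptotic bound.} The upper bound in~(\ref{eqn.tbound}), $t(s)\leq(1+o(1))\,s/\log_2 s$, is taken from~\cite{BHL2018}, and applying it gives the theorem.

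If one wanted to avoid citing~\cite{BHL2018}, the main work would be proving that upper bound directly. The plan would be greedy: every tournament on $m$ vertices contains a transitive subtournament of order $\lfloor\log_2 m\rfloor+1$, which is an acyclic set. Removing such sets repeatedly partitions the vertices into about $s/\log_2 s$ acyclic classes.

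For list colorings, I would adapt the argument in the style of Alon's bound for list chromatic number. Give each color $c$ the set of vertices whose list contains $c$. Within that set, greedily extract large transitive subtournaments, and assign each vertex a color with a counting argument. This gives $(1+o(1))\,s/\log_2 s$ with lists of that size, because each extraction step removes roughly $\log_2 m$ vertices while $m\geq s/\log^2 s$, and the final tail contributes only lower order. This is the step I expect to be the main obstacle. Since~(\ref{eqn.tbound}) is stated earlier in the paper, however, I would simply invoke it.
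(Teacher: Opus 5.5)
Your proof is correct and follows the paper's argument: the component lemma together with Camion's theorem gives $\vec{\chi}_{\ell}(T)\leq t(s)$, and the upper bound in~(\ref{eqn.tbound}) finishes it. Your explicit check that $t$ is nondecreasing, and your separate handling of one-vertex components, fill in details the paper leaves implicit; the closing sketch of a direct list-coloring bound is not needed, and it is too vague to stand on its own as a proof.
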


We now show that Theorem \ref{thm: tournaments} is essentially best possible (up to a constant factor).
Indeed, one may just take a random tournament on $n=s$ vertices
. It is known that
this tournament has no acyclic set of size at least $2 \log_2{n} + 2$ with positive probability (in fact, with probability tending to $1$ as $n\rightarrow\infty$~\cite{EM1964}).
The next theorem shows that we can also choose a tournament of arbitrary order.

\begin{theorem}
For all positive integers $s$ and $n$ with $s\leq n$,
there exists an $n$-vertex tournament $T$ with circumference at most $s$ such that
 $\vec{\alpha}(T) \leq \frac{4n}{s} \log_2 (2s)$, and so 
$\vec{\chi}(T) \geq \frac{s}{4 \log_2(2s)}.$ 
\end{theorem}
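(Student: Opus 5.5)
Split the $n$ vertices into blocks $B_1,\ldots,B_q$ of size at most $s$. Write $n = qs' + r$ with $q=\lceil n/s\rceil$ and the blocks as equal as possible, so each has size between $\lfloor n/q\rfloor$ and $s$. On each block place a tournament $T_i$ with small acyclic sets. Between blocks, orient every arc from $B_i$ to $B_j$ whenever $i<j$. Then every directed cycle lies inside a single block, so the circumference is at most $\max_i |B_i|\le s$.

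An acyclic set $S$ of $T$ meets each block in an acyclic set of $T_i$. Hence $\vec\alpha(T)\le \sum_i \vec\alpha(T_i)$.

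For the blocks I will use the random-tournament bound recalled above. Every $m$-vertex tournament on a block of size $m\ge1$ can be chosen with $\vec\alpha(T_i)\le 2\log_2 m+2 = 2\log_2(2m)$, which also covers small $m$ trivially. For $m=1,2$ this reads $2\log_2 2=2\ge m$ and $2\log_2 4=4\ge 2$, so it holds. For general $m$ I would rederive it by the first-moment computation to avoid relying on a "with positive probability for large $n$" statement. The expected number of transitive subtournaments on $k$ vertices is $\binom{m}{k}k!\,2^{-\binom k2}\le m^k2^{-k(k-1)/2}=\bigl(m2^{-(k-1)/2}\bigr)^k$. This is $<1$ once $k-1>2\log_2 m$, so some tournament has $\vec\alpha\le 2\log_2 m+1\le 2\log_2(2s)$.

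Summing over blocks gives $\vec\alpha(T)\le q\cdot 2\log_2(2s)$. Since $s\le n$, we have $q=\lceil n/s\rceil\le 2n/s$, so $\vec\alpha(T)\le \frac{4n}{s}\log_2(2s)$. Finally, $\vec\chi(T)\ge n/\vec\alpha(T)\ge \frac{s}{4\log_2(2s)}$.

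The only real care needed is this constant bookkeeping. The factor $4$ comes from $2$ in the block bound times $2$ from the ceiling, and the $\log_2(2s)$ absorbs the additive constant.
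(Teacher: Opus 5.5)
Your proof is correct and uses the paper's construction: $\lceil n/s\rceil$ blocks of size at most $s$, a tournament with small acyclic sets on each block, all inter-block arcs oriented forward, and the bookkeeping $\lceil n/s\rceil\le 2n/s$ with $2\log_2 s+2=2\log_2(2s)$. The only difference is that you prove the block bound $\vec\alpha\le 2\log_2 m+1$ yourself by a first-moment count for each block size $m$, which makes it self-contained, whereas the paper cites that a random $s$-vertex tournament has no acyclic set of size $2\log_2 s+2$ with positive probability.
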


\begin{proof}
Let $T$ be the following tournament. 
Partition the vertices into $\lceil\frac{n}{s}\rceil$
sets, $A_1,\ldots, A_{\lceil n/s\rceil}$, each of size at most $s$. On the vertices of each $A_i$,
orient the edges so that the largest acyclic set in
$A_i$ 
has size less than $2 \log_2 s + 2$ (this is always possible since the random tournament on $s$ vertices achieves this bound with positive probability). 
For any two vertices $v_i \in A_i$ and $v_j \in A_j$, where $i < j$, put an arc from $v_i$ to $v_j$. Note that each cycle $C$ in $T$ is contained within one of the sets $A_i$, so $C$ has length at most $s$.
Moreover,
since $\vec{\alpha}(A_i)< 2 \log_2 s + 2$,
 it follows that 
\[ \vec{\alpha}(T) < \lceil n/s \rceil (2 \log_2 s + 2) < \tfrac{4n}{s}\, \log_2 (2s).\]
Finally, $\vec{\chi}(T) \geq n/\vec{\alpha}(T) > \frac{s}{4 \log_2(2s)}$. 
\end{proof}

We now consider the problem for general oriented graphs. 
We shall see in Corollary~\ref{cor.shortcycles} that we can gain a factor of 2 over the non-list version
of Corollary~\ref{cor:Bondy-like} above (since the digirth is at least 3).

\begin{theorem} \label{thm: short cycles}
Let $D$ be a simple 
digraph with circumference $s$ and digirth~$g$.
Then $\vec{\chi}(D) \leq \lceil \frac{s}{g-1} \rceil$. 
\end{theorem}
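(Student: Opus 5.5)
We first reduce to the case where $D$ is strongly connected. By the lemma above, $\vec{\chi}(D)$ is the maximum of $\vec{\chi}(D')$ over the strong components $D'$. Passing to a component does not increase the circumference and does not decrease the digirth, so we may assume $D$ is strong. Set $k=\lceil s/(g-1)\rceil$.

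\textbf{Coloring.} Fix a root $r$ and run a depth-first search from $r$. This gives a spanning out-tree $T$, because $D$ is strong. Write $h(v)$ for the depth of $v$ in $T$, and color $v$ with
\[ f(v)=\lfloor h(v)/(g-1)\rfloor \bmod k. \]
The standard DFS facts we will use are these. Every arc is a tree arc, a forward arc, a back arc (to an ancestor) or a cross arc (to a vertex discovered earlier that is not an ancestor). In addition, if $C$ is any directed cycle and $w$ is the vertex of $C$ discovered first, then every vertex of $C$ is a descendant of $w$ in $T$. Hence the arc $xw$ of $C$ entering $w$ is a back arc.

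\textbf{Key step: two cycle vertices at a controlled depth gap.} Let $C$ be a directed cycle, with $w$ and $x$ as above. The tree path from $w$ to $x$ followed by the arc $xw$ is a directed cycle of length $h(x)-h(w)+1$. This length is at least $g$ and at most $s$, so
\[ g-1\le h(x)-h(w)\le s-1 . \]
Thus $C$ always contains two vertices whose depths differ by an amount in $[g-1,s-1]$. The aim is to show that such $w,x$ cannot receive the same color, so no directed cycle is monochromatic and $\vec{\chi}(D)\le k$.

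\textbf{Main obstacle.} Since $h(x)-h(w)\ge g-1$, the blocks $\lfloor h/(g-1)\rfloor$ of $w$ and $x$ differ by at least $1$. The difficulty is the upper end. Because $h(x)-h(w)\le s-1<k(g-1)$, the block difference is at most $k$, and equality is possible in the boundary case (for instance $s=k(g-1)$ with $h(w)$ at the end of a block). Then $w$ and $x$ would share a color.

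To close this gap, I would first use more of the cycle than the single pair $(w,x)$. Along $C$, depth rises by at most one per tree, forward or back step. The depths of the vertices of $C$ between $w$ and $x$ therefore climb through every block between them, and a monochromatic $C$ would have to skip a whole color class. That is impossible if depths increase by at most $1$ along $C$ on the way up to $x$.

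If cross arcs break this monotonicity, I would instead shift the coloring to $\lfloor (h(v)+c)/(g-1)\rfloor \bmod k$. I would choose the offset $c$, or the root, so that the extremal case cannot arise. As a last resort, I would re-run the argument with $x$ chosen as the deepest vertex of $C$ adjacent to $w$ via a back arc, so that the relevant depth gap is at most $s-g+1$ and lies strictly inside $k$ blocks.
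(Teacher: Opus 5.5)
\textbf{There is a genuine gap, and it sits exactly where the theorem has content.} Your reduction to the strong case and your DFS facts are correct. So is the pair argument: every cycle $C$ contains $w,x$ with $g-1\le h(x)-h(w)\le s-1$. With $\lceil\frac{s-1}{g-1}\rceil+1$ colors, the blocks of $w$ and $x$ always differ modulo the number of colors, so what you have actually proved is the older bound $\vec{\chi}(D)\le\lceil\frac{s-1}{g-1}\rceil+1$, which the paper attributes to earlier work. The theorem's whole point is the saving of one color when $s\not\equiv 1 \pmod{g-1}$. That is precisely the boundary case $B_x-B_w=k$ that you leave open.

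\textbf{None of your three remedies closes the boundary case.}

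(i) It is false that depth rises by at most one along tree, forward and back arcs. A forward arc goes from $u$ to an arbitrary proper descendant, at depth at least $h(u)+2$ and possibly much deeper. A cross arc can also land far deeper. Tree arcs raise depth by exactly one and back arcs lower it. Hence a monochromatic cycle must cross the $(k-1)(g-1)$ intermediate levels by exactly such a forward or cross arc, so ``climbing through every block'' assumes away the only case that has to be excluded. To exclude it you would need to show that such a jump, combined with tree paths, forces a directed cycle longer than $s$ or shorter than $g$. That argument would have to handle tree paths that meet $C$ above or below the skipped band, and the white-path constraints on cross arcs. Nothing of this kind is in your proposal, so it is not established that the depth-block coloring with only $k$ colors is proper.

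(ii) A single offset $c$ or choice of root does not obviously help. Whether $B_x-B_w=k$ depends on $(h(w)+c)\bmod(g-1)$, and different cycles have their first vertex $w$ at different residues. You give no argument that one choice works for all cycles simultaneously.

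(iii) On $C$, $w$ has only one in-neighbour, so there is nothing to choose. Even allowing chords, nothing bounds $h(x)-h(w)$ by $s-g+1$: the tree path from $w$ to $x$ together with the arc $xw$ can be a cycle of length exactly $s$.

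\textbf{How the paper avoids this.} It does not use depth layers. It takes a coherent cyclic order in which a longest cycle is simple (Bessy and Thomass\'e). This yields an enumeration split into $s$ consecutive stable sets $I_1,\dots,I_s$ such that every backward arc $v_qv_p$ has a forward path from $v_p$ to $v_q$. Because the classes are stable, that forward path advances at least one class per arc. Inside $g-1$ consecutive classes it therefore has length at most $g-2$, and with the backward arc it closes a cycle of length less than $g$, a contradiction. The key property is that every backward arc comes with a short forward witness whose length is controlled by the class gap. DFS depths lack this, because forward and cross arcs are unconstrained.
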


We note that a slightly weaker version of this  
theorem is known: it was proved in \cite{CG2019} that $\vec{\chi}(D) \leq \lceil \frac{s-1}{g-1} \rceil + 1$. Our bound is clearly at most this value and for many values of $s$ and $g$ it is an improvement of one. Additionally, our proof is somewhat shorter and less technical.  
To prove Theorem \ref{thm: short cycles}, we will use a result of
Bessy and Thomass\'{e}. 

First, we need some notation. Let $D$ be a strong simple digraph on vertex set $V$.
An enumeration $E= v_1,\ldots, v_n$ of $V$ is \emph{elementary equivalent}
to another enumeration $E'$ if one of the following holds:
 $E'= v_n, v_1,\ldots, v_{n-1}$,
or $E'= v_2, v_1, v_3,\ldots,v_n$
and neither $v_1v_2$ nor $v_2v_1$ is an arc in $D$. Two  enumerations $E, E'$ of $V$ are said to be \emph{equivalent} if there is a sequence $E=E_1,\ldots,E_k = E'$ such that $E_i$ and $E_{i+1}$ are elementary equivalent for each $i$. The equivalence classes of this equivalence relation are called the \emph{cyclic orders} of $D$. Given an enumeration
$E = v_1,\ldots,v_n$ we say that an arc $v_iv_j$ is a \emph{forward arc} (with respect to $E$) if $i < j$, and a \emph{backward arc} otherwise. A directed path
in $D$ is called a \emph{forward path} if all its arcs are forward arcs. The \emph{index} of directed cycle $C$, $i_E(C)$, is the number of backward arcs of $C$. Importantly, the index of 
a cycle is invariant in a given cyclic order $\mathcal{C}$. A cycle is \emph{simple} if it has index one. A cyclic order $\mathcal{C}$ is
\emph{coherent} if for every enumeration $E$ of $\mathcal{C}$ and
every backward arc $v_jv_i$ in $E$, there is a forward path from $v_i$ to $v_j$.
The next lemma is similar to Lemma~1 of~\cite{BT2007}.
\begin{lemma} \label{lem.Bessy}
  Let $D$ be a strong simple digraph and let $C$ be a directed cycle 
 of $D$.  Then $D$ has a coherent cyclic order such that $C$ is simple.
\end{lemma}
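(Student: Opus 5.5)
We follow the strategy of Bessy and Thomass\'e: among all enumerations of $V$ in which $C$ is simple, take one minimizing a suitable potential, and show that its cyclic order is coherent.

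\emph{Initial enumeration.} Write $C = u_1u_2\cdots u_\ell u_1$ and start with an enumeration $E_0$ that lists $u_1,\ldots,u_\ell$ consecutively at the beginning, followed by the remaining vertices in any order. The only backward arc of $C$ is $u_\ell u_1$, so $i_{E_0}(C)=1$. Since the index of a cycle is invariant on a cyclic order, $C$ is simple in every enumeration equivalent to $E_0$, and in fact in every enumeration we consider below.

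\emph{Potential.} For an enumeration $E$, let $\Phi(E)=\sum_{\text{cycles } C'} i_E(C')$, the total index of all directed cycles of $D$. This is constant on a cyclic order. Among all enumerations $E$ with $i_E(C)=1$, choose one minimizing $\Phi(E)$, and let $\mathcal C$ be its cyclic order.

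\emph{Key step: $\mathcal C$ is coherent.} Suppose not. Then some enumeration $E=v_1,\ldots,v_n$ in $\mathcal C$ has a backward arc $v_jv_i$ ($i<j$) with no forward path from $v_i$ to $v_j$. Using the rotation $v_n,v_1,\ldots,v_{n-1}$, we may rotate so that $v_i=v_1$.
\begin{itemize}
\item Let $R$ be the set of vertices reachable from $v_1$ by forward paths. Then $v_1\in R$ and $v_j\notin R$.
\item Form $E'$ by listing $R$ first, in its current relative order, and then the remaining vertices, also in their relative order.
\item Every forward arc of $E$ leaving $R$ would extend a forward path, so there are none. Hence each arc from $R$ to $V\setminus R$ that was forward in $E$ does not exist. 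Consequently every cycle's arcs between $R$ and $V\setminus R$ either keep their status or switch from backward to forward.
\item In particular the arc $v_jv_1$ becomes forward. Its backward status in $E$ counted toward the index of some cycle through it (one exists since $D$ is strong), so that cycle's index strictly drops.
\item No arc switches from forward to backward, so no index increases.
\end{itemize}
Thus $\Phi(E')<\Phi(E)$. We also need $i_{E'}(C)\ge 1$, which holds automatically because a cycle always has at least one backward arc. Together with $i_{E'}(C)\le i_E(C)=1$, this gives $i_{E'}(C)=1$, and $E'$ contradicts the minimality of $\Phi$.

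\emph{Main obstacle.} The crux is to verify that reordering into $R$ followed by $V\setminus R$ never turns a forward arc into a backward one. Arcs inside $R$ and inside $V\setminus R$ keep their relative order, so they are fine. The delicate case is an arc $xy$ with $x\notin R$ and $y\in R$ that was forward in $E$: in $E'$ it becomes backward. I expect this to be the hard step. My first attempt is to move instead the set $S$ of vertices that can reach $v_j$ by forward paths, placing it at the end, which is symmetric. Failing that, I would replace $\Phi$ by the lexicographically ordered vector of indices (or by the count of backward arcs weighted by cycles) and argue that the cycles whose index increases are dominated by those whose index decreases, as in Lemma~1 of~\cite{BT2007}.
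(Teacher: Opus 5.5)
Your overall strategy matches the paper's: minimize the total index over enumerations in which $C$ is simple, then show the minimizer is coherent. The paper simply defers that second step to Lemma~1 of Bessy and Thomass\'e. However, your own argument for the key step is wrong as written.

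What you correctly proved is that no forward arc leaves $R$. That means the arcs from $R$ to $V\setminus R$ are all backward in $E$ and become forward in $E'$. It says nothing about arcs from $V\setminus R$ into $R$, and those that were forward in $E$ become backward in $E'$, as you note yourself at the end. Moreover, $v_jv_1$ is one of these entering arcs. Since $v_1\in R$ and $R$ keeps its relative order, $v_1$ is the \emph{first} vertex of $E'$, so $v_jv_1$ is still backward. Thus both bullet claims, ``$v_jv_1$ becomes forward'' and ``no arc switches from forward to backward'', are false. Consequently neither $\Phi(E')<\Phi(E)$ nor $i_{E'}(C)\le 1$ has been established. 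Your first fallback, moving the vertices that reach $v_j$ to the end, has the mirror-image defect: $v_j$ becomes the last vertex, so $v_jv_1$ again stays backward.

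The missing idea is to count per cycle rather than per arc. With that change your reordering works, and no lexicographic potential is needed.
\begin{itemize}
\item A directed cycle $C'$ crosses between $R$ and $V\setminus R$ the same number $t$ of times in each direction.
\item Its $t$ arcs from $R$ to $V\setminus R$ switch from backward to forward. At most $t$ of its arcs from $V\setminus R$ to $R$ switch from forward to backward, and all its other arcs keep their status. Hence $i_{E'}(C')\le i_E(C')$.
\item If $C'$ contains $v_jv_1$ (such a cycle exists because $D$ is strong), one of its $t$ entering arcs was already backward in $E$. So at most $t-1$ switch, and $i_{E'}(C')\le i_E(C')-1$.
\end{itemize}
This gives $\Phi(E')<\Phi(E)$ and $1\le i_{E'}(C)\le i_E(C)=1$, and your contradiction goes through.

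Alternatively, you can make an arc-by-arc argument valid by reordering only the segment $v_1,\ldots,v_j$. List first the vertices of the segment \emph{not} reachable from $v_1$ by forward paths, then the reachable ones, then $v_{j+1},\ldots,v_n$. Arcs from reachable to unreachable vertices of the segment are backward both before and after. Arcs in the other direction become forward, in particular $v_jv_1$. All other arcs keep their status.
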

\begin{proof}
  Amongst all cyclic orders of $D$ such that $C$ is simple, pick a cyclic order $\mathcal C$ which minimizes the sum of all cycle indices. 
Then the proof of Lemma 1 of~\cite{BT2007} shows that $\mathcal C$ is coherent.
\end{proof}

\begin{lemma} \emph{\cite[Section 4]{BT2007}} \label{Bessy}
Let $D = (V,A)$ be a strong simple digraph and $C$ be a longest cycle in $D$, of length $k$.
Suppose $\mathcal{C}$ is a coherent cyclic order of $D$ such that $C$ is simple in $\mathcal{C}$. Then there is an enumeration 
$E = v_1,\ldots,v_{i_1}, v_{i_1 + 1},\ldots,v_{i_2},$ $ v_{i_2 + 1},\ldots, v_{i_k}$ of $\mathcal{C}$ such that $\{v_{i_j + 1},\ldots,v_{i_{j+1}}\}$ is a stable set for all $j = 0,\ldots,$ $k-1$ (with $i_0:=0$). 
\end{lemma}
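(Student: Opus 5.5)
The plan is to normalise an enumeration so that $C$ starts at the front, label every vertex by the length of a longest forward path reaching it, and take the label classes as the $k$ blocks.

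First I will fix an enumeration $E=v_1,\dots,v_n$ of $\mathcal C$ and rotate it, using elementary equivalences of the first kind, so that $v_1=c_1$ is a vertex of $C$. Write $C=c_1c_2\cdots c_kc_1$ in cyclic order. Since $C$ is simple (index one), rotating so that $c_1$ comes first makes $c_1,\dots,c_k$ appear in this order in $E$, and $c_kc_1$ is the unique backward arc of $C$. For every vertex $v$ I then define $f(v)$ as the maximum number of vertices of a forward path from $c_1$ to $v$. This is well defined: if some vertex has no forward path from $c_1$, I would first argue via strong connectivity and coherence that one exists. Every backward arc $v_jv_i$ is shadowed by a forward path from $v_i$ to $v_j$, so any path from $c_1$ can be converted into a forward one.

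Next I will establish three properties of $f$.
\begin{enumerate}
\item $f$ strictly increases along forward arcs. This is immediate, since a forward path to $u$ followed by a forward arc $uw$ is a forward path to $w$.
\item $f(c_j)\ge j$ for each $j$, by following $C$ itself.
\item The key claim: $f(v)\le k$ for every $v$.
\end{enumerate}
For the key claim, suppose a forward path $P$ from $c_1$ to $v$ has $k+1$ vertices. I would use coherence to close $P$ into a directed cycle longer than $C$, contradicting the maximality of $C$. One way to attempt this is to take the last vertex of $C$ after $v$ in the enumeration (or the backward arc entering $c_1$) and splice $P$ with the portion of $C$ and a coherence-provided path. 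Alternatively, one could choose $\mathcal C$ among coherent orders to minimise total index, as in Lemma~\ref{lem.Bessy}, and derive a contradiction from a long forward path. I expect this step to be the main obstacle, and it is exactly where the Bessy--Thomass\'e argument does its real work.

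Finally, I will set $V_t=\{v: f(v)=t\}$ for $t=1,\dots,k$, so that $V(D)$ splits into at most $k$ classes. Each class is stable. A forward arc inside a class contradicts property~1. A backward arc $wu$ inside a class, with $u$ before $w$, yields by coherence a forward path from $u$ to $w$ with at least one arc, so $f(w)>f(u)$, again a contradiction. It remains to show that the enumeration obtained by listing $V_1$, then $V_2$, and so on (each in the original relative order) lies in $\mathcal C$. I would reach it from $E$ by bubble-sorting on $f$ using adjacent transpositions. Such a swap is needed only for consecutive $u$ before $w$ with $f(u)>f(w)$. These two vertices cannot be joined by an arc: an arc $uw$ would be forward, forcing $f(w)>f(u)$, and an arc $wu$ would be backward, whose coherence path again forces $f(u)<f(w)$. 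Coherence is preserved along the sort, since it is a property of the whole cyclic order. The swaps are therefore elementary equivalences (after rotating so that the pair sits at the front). Padding with empty blocks if some classes are empty then gives the required $i_1\le\dots\le i_k$.
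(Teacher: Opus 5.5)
Your step~3, $f(v)\le k$, is not merely the hard step: it is false, so the plan cannot be completed as stated. Let $D$ consist of the directed triangles $x\to a\to b\to x$ and $b\to c\to d\to b$, and take $E=x,a,b,c,d$.
\begin{itemize}
\item Every arc lies in one of the two triangles, and each triangle has exactly one backward arc ($bx$, resp.\ $db$), so $\mathcal C$ is coherent.
\item $C=xab$ is a longest cycle, so $k=3$; it is simple, and $c_1=x$ comes first.
\item Yet $x\to a\to b\to c\to d$ is a forward path, so $f(d)=5$ and you get five classes.
\end{itemize}
Maximality of $C$ constrains cycles, not forward paths, and this path cannot be closed into a cycle at all.

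Your later steps cannot repair this. A transposition of two consecutive non-adjacent vertices never changes which arcs are forward. Moreover, a forward path on $m$ vertices must meet $m$ distinct blocks in any enumeration whose blocks are consecutive stable sets. So with your moves the outcome is fixed by the initial enumeration, which you leave arbitrary. Choosing a different start on $C$ does not help in general: for the three triangles $xab$, $aeg$, $bcd$ with $E=x,a,e,g,b,c,d$, every rotation beginning on $C$ has a forward path on at least four vertices. Your fallback of re-choosing $\mathcal C$ is not available either, since $\mathcal C$ is given, and the example already has minimum total index.

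The missing idea is that the labels must wrap around modulo $k$, which forces vertices to be carried across the cut. One way to do this:
\begin{itemize}
\item Give forward arcs weight $1$ and backward arcs weight $1-k$. A cycle of length $\ell$ and index $i\ge 1$ then has weight $\ell-ik\le 0$, because $\ell\le k$; this is exactly where maximality of $C$ is used.
\item Hence $\phi(v)$, the maximum weight of a walk ending at $v$, is finite, and $\phi(v)\ge\phi(u)+w(uv)$ for every arc $uv$.
\item Coherence says every arc lies in a simple cycle, which has weight at most $0$. This gives the reverse bounds: $\phi$ changes by an amount in $[1,k-1]$ along forward arcs and in $[1-k,-1]$ along backward arcs.
\item Consequently no arc joins two vertices whose $\phi$-values are congruent modulo $k$.
\item It remains to check that listing the vertices by the representative of $\phi(v)$ in $\{1,\ldots,k\}$ gives an enumeration in $\mathcal C$; every cycle keeps its index.
\end{itemize}
In the example, $\phi$ equals your $f$ up to an additive constant. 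Its residues modulo $3$ give the classes $\{x,c\},\{a,d\},\{b\}$, realised by $x,c,a,d,b\in\mathcal C$, in which the arc $bc$ has become backward.

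A smaller point concerns why $f$ is defined. Your justification runs the wrong way: coherence gives a forward path from $v_i$ to $v_j$, opposite to the backward arc $v_jv_i$. The claim itself also fails, e.g.\ for the digons $v_1\leftrightarrow v_3\leftrightarrow v_2$ with $E=v_1,v_2,v_3$ and $C=v_1v_3v_1$, where no forward path from $v_1$ reaches $v_2$. Unlike step~3, this is harmless: just let forward paths start anywhere.
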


\begin{proof}[Proof of Theorem \ref{thm: short cycles}]
Note that it is sufficient to prove the result for strong digraphs. Indeed, if $D$ has $r$ strongly connected components $D_1,\ldots,$ $D_r$, we color each $D_i$ with at most
$\lceil \frac{s}{g-1} \rceil$ colors. Since
every directed cycle
of $D$ is contained entirely in a single $D_i$, this gives a proper coloring
of $D$. 

Thus, we may assume that $D$ is strongly connected.
Let $C$ be a cycle of length $s$ in $D$.
By Lemma~\ref{lem.Bessy}, $D$ has a coherent cyclic order $\mathcal C$ such that $C$ is simple.
Now by Lemma \ref{Bessy}, there is an enumeration 
$E = v_1,\ldots,v_{i_1}, v_{i_1 + 1},\ldots,v_{i_2}, v_{i_2 + 1},\ldots, v_{i_s}$
of $\mathcal{C}$ such that $I_{j}:= \{ v_{i_{j-1} + 1},\ldots,v_{i_{j}}\}$
 is a stable set for each $j=1,\ldots,s$. 
We claim that 
$I_j \cup I_{j+1} \cup\ldots \cup I_{j'}$
is an acyclic set for each $1 \leq j \leq j' \leq \min\{j+g-2, s\}$. Indeed,
since $I_j$, $I_{j+1},\ldots, I_{j'}$ 
are all stable sets, any cycle on $I_j\cup\ldots \cup I_{j'}$ 
must use a backward
arc $v_qv_p$, with $v_q \in I_{j+r}$ and $v_p \in I_{j+k}$, where $r > k \geq 0$. Now, since 
$D$ is assumed to be of digirth $g$, there is no forward path from $v_p$ to $v_q$.
This contradicts the fact that $\mathcal{C}$ is coherent.
Thus, we can color the digraph induced by $I_j\cup\ldots \cup I_{j'}$ 
 with one color.
Coloring consecutive $(g-1)$-tuples $I_j,\ldots,I_{j+g-2}$ (and perhaps one shorter tuple) with a single color gives a coloring
with at most $\left \lceil \frac{s}{g-1} \right \rceil$ colors.
\end{proof}

\begin{corollary} \label{cor.shortcycles}
Let $D$ be an oriented graph with circumference $s$.
Then $\vec{\chi}(D) \leq \lceil \frac{s}{2} \rceil$. 
\end{corollary}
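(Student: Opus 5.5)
This is a direct specialization of Theorem~\ref{thm: short cycles}, so the plan is simply to check its hypotheses and substitute values.

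First, an oriented graph $D$ is a simple digraph with no loops, no parallel arcs and no digons. Every directed cycle in a loopless simple digraph has length at least $2$. Since digons are forbidden, no directed cycle of length $2$ exists, so every directed cycle of $D$ has length at least $3$. As the circumference $s$ is defined, $D$ has at least one directed cycle. Hence the digirth $g$ of $D$ is finite and satisfies $g \geq 3$, which gives $g-1 \geq 2$.

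Next, apply Theorem~\ref{thm: short cycles} to $D$, which is a simple digraph with circumference $s$ and digirth $g$. This yields $\vec{\chi}(D) \leq \lceil \frac{s}{g-1} \rceil$. The ceiling is monotone, so $g-1\ge 2$ gives $\frac{s}{g-1} \leq \frac{s}{2}$ and therefore $\lceil \frac{s}{g-1}\rceil \leq \lceil \frac{s}{2}\rceil$, which is the desired bound.

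There is no real obstacle; the only point requiring care is to confirm that $g\ge 3$ holds for oriented graphs. That is immediate from the absence of loops and digons.
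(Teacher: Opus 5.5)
Your proof is correct and follows the paper's own route: the corollary is obtained by applying Theorem~\ref{thm: short cycles} with digirth $g \geq 3$ (oriented graphs have no loops or digons), which gives $\lceil s/(g-1)\rceil \leq \lceil s/2 \rceil$.
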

The corollary is clearly tight for $s =3$ or $s=4$, but we think that it is not optimal for large $s$ (see Theorem~\ref{thm: tournaments}).

\begin{conjecture}
If $D$ is an oriented graph with no directed cycle of length greater than $s$, 
then $\vec{\chi}(D) = O(s / \log s)$ as $s\rightarrow\infty$.
\end{conjecture}

It was proved by Neumann-Lara~\cite{N1982} that if $D$ is an oriented graph only containing odd cycles
or only containing even cycles then $\vec{\chi}(D) \leq 2$. On the other hand,
Chen, Ma and Zang  
showed the following.

\begin{theorem}\emph{\cite{CMZ2015}} 
 Let $k$ and $r$ be integers with $k\geq 2$ and $0\leq r\leq k-1$.
If a simple digraph $D$ contains no directed cycle of length $r$ modulo $k$,
then $\vec{\chi}(D) \leq k$.
 
\end{theorem}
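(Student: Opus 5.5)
We prove the Chen--Ma--Zang theorem by reducing to the strongly connected case and then colouring by distances modulo $k$. By the lemma above on strongly connected components, $\vec\chi(D)$ is the maximum of $\vec\chi(D')$ over the strong components $D'$ of $D$. Every strong component of $D$ also has no directed cycle of length $r$ modulo $k$. So we may assume $D$ is strongly connected.

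Fix a root vertex $v_0$. For each vertex $v$, let $d(v)$ be the length of a shortest directed path from $v_0$ to $v$, which exists because $D$ is strong. The natural first attempt colours $v$ by $d(v) \bmod k$. This is not obviously right for every $r$, however, so we plan to use the following more flexible scheme instead. For each colour class we want to show that every directed cycle inside it would force a cycle of length $\equiv r \pmod k$.

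The key tool is a path-lengths lemma for strong digraphs. If $u \to w$ is an arc and $P$ is a shortest path from $v_0$ to $u$, and $Q$ is a path from $w$ back to $v_0$, then the arc $uw$ together with $P$ and $Q$ forms a closed walk, not necessarily a cycle. This is why the argument needs care. The cleaner approach, which we believe is the one used in the source, runs by induction on $k$ or on $|V(D)|$. In a strong digraph with no cycle of length $r \bmod k$, one finds a vertex partition into $k$ sets, or a degeneracy-type structure, via an ear decomposition.

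Concretely, we would proceed by induction using an ear decomposition of the strong digraph $D$: $D = C_0 \cup P_1 \cup \dots \cup P_t$, where each $P_i$ is a directed path (ear) whose internal vertices are new and whose endpoints lie in earlier parts. We would maintain a $k$-colouring in which each colour class is acyclic. We would also maintain a potential function $\phi: V \to \mathbb Z_k$ with $\phi(w) = \phi(u)+1$ along each arc of the ears, with colour $= \phi$. The essential step is to show that the non-ear arcs behave consistently. If an arc $uw$ has $\phi(w) \neq \phi(u)+1$, then in a strong digraph one builds two cycles whose lengths differ by a controlled amount. Using all $k$ residues, some cycle has length $\equiv r$.

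The main obstacle is exactly this: closed walks are not cycles, so residue arguments on walks do not transfer directly. We would try first the shortest-path potential $\phi(v)=d(v) \bmod k$, together with a monochromatic-cycle argument. A monochromatic cycle $C$ has all its vertices in one residue class, and every arc $uw$ of $C$ satisfies $d(w)\le d(u)+1$. Hence along $C$ the distance can never increase by exactly $1$, since that would change the residue. It must therefore decrease at some arc. Take a vertex $w$ of $C$ minimizing $d$, and consider its in-arc $uw$ in $C$. Now splice the shortest path to $w$ with the arc $uw$ and the part of $C$ from $w$ to $u$, choosing shortcuts where the shortest-path tree meets $C$, in order to extract a genuine cycle of length $\equiv r \pmod k$. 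The freedom in $r$ would be handled by shifting the root's starting residue, that is, by colouring $v$ with $d(v) + c \bmod k$. Making this splice produce a true cycle, rather than a walk, is where we expect the real work to lie.
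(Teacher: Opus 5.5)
The paper gives no proof of this theorem; it is quoted from \cite{CMZ2015}. So your proposal has to stand on its own, and it has a genuine gap. The reduction to strongly connected $D$ is correct, but every substantive step after it is left open. The ear-decomposition paragraph only asserts that an inconsistent arc yields ``two cycles whose lengths differ by a controlled amount'' and that ``using all $k$ residues'' one of them has length $\equiv r$; no construction is given. Moreover, the potential $\phi$ with $\phi(w)=\phi(u)+1$ on every ear arc is not well defined. An ear whose two ends are already labelled forces a congruence on its length modulo $k$, and that congruence need not hold. For the distance approach, you yourself defer the decisive step, turning the splice into a genuine cycle, as ``the real work''.

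More seriously, the concrete scheme $c(v)=d(v)\bmod k$ cannot be completed, for any choice of root. Every arc $uw$ satisfies $d(w)\le d(u)+1$. If $c(u)=c(w)$, then $d(w)\neq d(u)+1$ (as $k\ge 2$), so $d(w)\le d(u)$. Summing over the arcs of a monochromatic cycle $C$ shows that $d$ is \emph{constant} on $C$.

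So your claim that the distance ``must decrease at some arc'' is false. A monochromatic cycle lies inside a single distance layer, and the vertex $w$ minimizing $d$ is arbitrary. Splicing a shortest $v_0$--$w$ path with $C$ only produces $C$ with a pendant path, so nothing about $|C| \bmod k$ can be extracted.

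Such cycles do occur under the hypothesis. Take the bidirected triangle (three vertices, all six arcs). Every cycle has length $2$ or $3$, so with $k=3$, $r=1$ there is no cycle of length $r$ modulo $k$. Yet for any root, the other two vertices are both at distance $1$ and span a digon, which is monochromatic. Indeed the scheme uses only two colours on a digraph with $\vec\chi=3$.

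Replacing $d(v)$ by $d(v)+c$ does not help either. Adding a constant merely renames the colours and leaves the colour classes unchanged, so it cannot encode any dependence on $r$.

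What is missing is a genuinely different colouring, since the example shows it cannot be a function of the distance from a single root. You also need an argument that turns a monochromatic cycle into a cycle of length exactly $r$ modulo $k$. This is the content of \cite{CMZ2015}, and the proposal contains none of it.
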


We show that this theorem cannot be strengthened to a list coloring version in the case 0 modulo $k$.
\begin{proposition}
For all positive integers $k\geq 3$ and $t$,
there is 
an oriented graph 
$D$ such that all cycles of $D$ have 
length 0 modulo $k$, and $\vec{\chi}_{\ell}(D) > t$. 
\end{proposition}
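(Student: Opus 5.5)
We use a blow-up of the directed $k$-cycle together with the pigeonhole argument that shows $K_{n,n}$ has large list chromatic number. We may assume $t\ge 2$, since an oriented graph containing a directed cycle already has $\vec\chi_\ell(D)\ge 2>1$.

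\textbf{Construction.} Set $m:=k(t-1)+1$ and let $\mathcal S$ be the family of all $t$-subsets of $[m]$. For $i\in\{0,\ldots,k-1\}$ take a class $V_i=\{v_{i,S}: S\in\mathcal S\}$, with one vertex for each $t$-subset. Let $D$ have vertex set $V_0\cup\cdots\cup V_{k-1}$, and put an arc from every vertex of $V_i$ to every vertex of $V_{i+1}$, with indices taken modulo $k$.

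\textbf{$D$ satisfies the cycle condition.} Since $k\ge 3$, no pair of classes is joined in both directions, so $D$ has no digons and is an oriented graph. Every arc goes from $V_i$ to $V_{i+1}$. Hence a closed directed walk that advances the class index by one at each step must take a multiple of $k$ steps to return to its starting class. So every directed cycle has length $0$ modulo $k$.

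\textbf{List assignment.} Give $v_{i,S}$ the list $L(v_{i,S})=S$; each list has size $t$. Suppose $f$ is any $L$-coloring, and let $C_i=f(V_i)\subseteq[m]$ be the set of colors used on $V_i$.

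\textbf{Each class uses many colors.} If $|[m]\setminus C_i|\ge t$, then some $t$-set $S\subseteq[m]\setminus C_i$ exists. But $v_{i,S}\in V_i$ must receive a color from $S$, which is disjoint from $C_i$, a contradiction. Hence $|[m]\setminus C_i|\le t-1$ for every $i$.

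\textbf{Some color appears in every class.} By the union bound,
\[ \Bigl|\bigcup_{i}([m]\setminus C_i)\Bigr|\le k(t-1)<m, \]
so some color $c$ lies in every $C_i$. Choose $u_i\in V_i$ with $f(u_i)=c$. Then $u_0u_1\cdots u_{k-1}u_0$ is a monochromatic directed $k$-cycle. Thus no $L$-coloring is acyclic, and $\vec\chi_\ell(D)>t$.

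The main step is choosing a construction in which the internal vertices of the cycles cannot escape a shared color. Subdividing the arcs of a bipartite graph fails for exactly this reason: the new vertices have their own size-$t$ lists and can always break a cycle. Making every class of the $C_k$ blow-up carry a full copy of all $t$-subsets avoids the problem, because then the counting forces a common color in all $k$ classes. After that choice, the remaining checks are routine.
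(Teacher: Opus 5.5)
Your proof is correct and is essentially the paper's argument. It uses the same blow-up of $\vec{C_k}$ with each class indexed by all $t$-subsets of $[k(t-1)+1]$, the same observation that each class misses at most $t-1$ colors, and the same pigeonhole step yielding a color present in every class and hence a monochromatic directed $k$-cycle. (Your separate treatment of $t=1$ is unnecessary, since for $m=1$ the construction is just $\vec{C_k}$ with all lists equal to $\{1\}$.)
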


\begin{proof}
Let $k \geq 3$ and $t\geq 1$. Set $c = k(t-1) + 1$. Consider the following
digraph $D$. Take the directed cycle $\vec{C_k}$ with vertices $v_1,\ldots,v_k$
and blow-up each vertex $v_i$ into a set $B_i$  of independent vertices of
size $\binom{c}{t}$. Now, put a complete bipartite graph between every pair $(B_i, B_{i+1})$
with edges going from $B_i$ to $B_{i+1}$ (here, $B_{k+1}$ is $B_1$). Denote this
digraph by $D$. Clearly, all cycle lengths of $D$ are multiples of $k$. 
Let $L$ be an assignment of lists of size $t$ for $D$ such that on each set $B_i$ we see 
each $t$-subset of $\{1,\ldots,c \}$. 

Now suppose that $D$ is $L$-colorable. In any such coloring, at most $t-1$ of the
$c$ colors are absent on any given $B_i$. Indeed, suppose that on some $B_i$ we do not
see at least $t$ colors, say, the colors $\{1,\ldots, t\}$. This is clearly not possible
since then the vertex in $B_i$ with the list $\{1,\ldots, t\}$ was not colored. Thus,
in total, there are at most $k(t-1)$ colors missing from all the $B_i$. Thus,
there is some color $j$ that appears on all the $B_i$. But this is a contradiction
since we obtain a directed cycle of color $j$. Thus, $D$ is not $L$-colorable and
$\vec{\chi}_{\ell}(D) > t$.
\end{proof}

\section{Strong Erd\H{o}s--Hajnal conjecture}

In this section, we propose the following strengthening of the Erd\H{o}s--Hajnal conjecture 
(see Conjectures 1.4 and 1.5).

\begin{conjecture} \label{conj: StrongEH}
For every tournament $T$ there exists some $\varepsilon_T > 0$ such that
for any $n$-vertex $T$-free simple digraph $D$, $\vec{\alpha}(D) \geq n^{\varepsilon_T}$. 

\end{conjecture}

We say that a tournament $T$ has the \emph{Strong Erd\H{o}s--Hajnal property}
if the above conjecture is satisfied for $T$.
The following theorem gives an upper bound on the conjectured constant $\varepsilon_T$.

\begin{theorem}\label{thm:ub_zeta} 
Let $T$ be a tournament with $t\geq 3$ vertices, let $\varepsilon_T>0$, and suppose that for every $n$-vertex $T$-free oriented graph we have $\vec\alpha(D)\geq n^{\varepsilon_T}$. Then $\varepsilon_T\leq 2/t$.
\end{theorem}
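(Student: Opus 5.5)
We will use a random construction: a random oriented graph (not a tournament) that is sparse enough to contain no copy of $T$, yet has no large acyclic set. Fix $\delta>0$ and let $D$ be the random oriented graph on $[n]$ in which each pair $\{u,v\}$ independently is an edge with probability $p$, and each edge is oriented uniformly at random. Since $T$ is a tournament on $t$ vertices, the expected number of copies of $T$ in $D$ is at most
\[ n^t (p/2)^{\binom t2}. \]
Choosing $p = n^{-2/(t-1)-\delta'}$ for a small $\delta'>0$ makes this $o(n)$, or even $o(1)$. In either case we can delete one vertex from each copy and keep at least $n/2$ vertices, so the resulting oriented graph $D'$ is $T$-free.

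The main step is to show that, with high probability, $D$ has no acyclic set of size $k = C\, p^{-1}\log n$, that is, roughly $n^{2/(t-1)+\delta'}\log n$. An acyclic set $S$ of size $k$ corresponds to some linear order of $S$ in which every edge of $D[S]$ is forward. For a fixed $S$ and a fixed order, each of the $\binom k2$ pairs independently fails to be a backward edge with probability $1-p/2$. A union bound over the at most $n^k k!$ choices of $S$ and order gives
\[ n^k k!\,(1-p/2)^{\binom k2} \le \exp\!\Bigl(2k\log n - \tfrac{p}{2}\tbinom k2\Bigr), \]
which is $o(1)$ once $pk/4 > 2\log n$, that is, for $k \ge 9p^{-1}\log n$. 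This is the standard Spencer--Subramanian-type count. Hence $\vec\alpha(D') \le \vec\alpha(D) \le 9 n^{2/(t-1)+\delta'}\log n$.

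Combining the two steps, the hypothesis applied to $D'$, which has $n'\ge n/2$ vertices, gives
\[ (n/2)^{\varepsilon_T} \le 9 n^{2/(t-1)+\delta'}\log n \]
for all large $n$. Letting $n\to\infty$ and then $\delta'\to0$ yields $\varepsilon_T\le 2/(t-1)$.

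This bound is slightly weaker than the claimed $2/t$, and closing that gap is the main obstacle. I would try the deletion method with a denser edge density. If we only require the number of copies of $T$ to be at most $n/2$, the condition is $n^t p^{\binom t2}\lesssim n$, which gives $p = n^{-2/t}$ up to constants and the factor $2^{-\binom t2}$. With this choice deletion still leaves $n/2$ vertices, and the acyclic-set bound becomes $O(n^{2/t}\log n)$, giving $\varepsilon_T\le 2/t + o(1)$ and hence $\varepsilon_T \le 2/t$ in the limit. The care needed is in this deletion step: choosing $p = c\,n^{-2/t}$ with $c$ small enough that the expected number of copies of $T$ is at most $n/4$, then applying Markov's inequality and the union bound simultaneously. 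Every constant and the $\log n$ factor are absorbed as $n\to\infty$. Note that the hypothesis $t\ge3$ is used only to guarantee $\binom t2 > t-1$, so that the deletion density $p = c\,n^{-2/t}$ tends to zero.
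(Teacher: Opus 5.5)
Your final argument is correct and is essentially the paper's proof: deletion on a random oriented graph with $p=c\,n^{-2/t}$, Markov's inequality to keep at most $n/2$ copies of $T$, and an $O(n^{2/t}\log n)$ bound on $\vec\alpha$. The differences are that you derive the acyclic-set bound with a direct first-moment count over ordered $k$-sets instead of citing Spencer--Subramanian, and that the preliminary $2/(t-1)$ detour was unnecessary. Your closing remark needs one correction: $n^{-2/t}\to 0$ for every $t$, so $t\ge 3$ is really what makes $n^{2/t}\log n=o(n)$, which keeps the acyclic-set bound nontrivial and $k\le n$ in your union bound.
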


The proof is probabilistic.  
We say that events $A_1,A_2,\ldots$ hold \emph{with high probability (whp)} if $\PP(A_n) \to 1$ as $n \to \infty$. 
The {\em binomial random oriented graph} $D_{n,p}$ is obtained by starting with the complete graph $K_n$, choosing each of the $\binom{n}{2}$ undirected edges independently with probability $2p$, and then orienting the chosen edges independently in one of the two directions with equal probability $\frac12$.
Thus each vertex in $D_{n,p}$ has expected indegree and expected outdegree $(n-1)p$.

\begin{proof}

Consider the random 
oriented graph $D_{n,p}$ with $p= \frac12 n^{-2/t}$.
Spencer and Subramanian~\cite{SS2008}
showed that whp 
$\vec{\alpha}(D_{n,p}) \leq \frac{2 \ln (np) }{\ln 
(1-p)^{-1}}(1+o(1))$. Thus, $\vec{\alpha}(D_{n,p}) \leq 4 n^{2/t} \ln n$ whp.
Let $X_T$ count the number of copies of $T$ in $D_{n,p}$. 
Clearly, 
$$\EE[X_T] \leq \binom{n}{t} t!\, p^{\binom{t}{2}} 
\leq n^{t} n^{-t+1}/2^{t(t-1)/2} \leq  n/8. $$
Now, by Markov's inequality, $\PP(X_T \geq n/4) \leq 1/2$.
Therefore, there is an $n$-vertex oriented graph 
$D$ with number of copies of $T$ at most $n/4$ and
with $\vec{\alpha}(D) \leq 4 n^{2/t} \ln n$. We may delete a vertex
from each copy of $T$ in $D$ to obtain a $T$-free 
oriented graph $D'$ with at least $3n/4$ vertices. 
But $\vec{\alpha}(D') \leq \vec{\alpha}(D) \leq 4 n^{2/t} \ln n$, and the theorem follows.
\end{proof}

The following special case of Conjecture~\ref{conj: StrongEH} is worthy of attention.

\begin{conjecture}
The directed triangle has the Strong Erd\H os--Hajnal property.
\end{conjecture}

We do not have any means of approaching the above special case. Nevertheless, an inductive argument shows the following weaker bound.

\begin{proposition} Let $0<\delta<\sqrt{\ln 4}$. Then there exists a constant $c_{\delta}>0$ such that, for every $\vec C_3$-free digraph $D$ of order $n$, $\vec\alpha(D)\geq c_{\delta}\exp(\delta\sqrt{\ln n})$.
\end{proposition}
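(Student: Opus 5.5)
We argue by induction on $n$, using a recursion that trades a factor in the order for an additive gain in the acyclic set. Let $f(n)$ be the minimum of $\vec\alpha(D)$ over $\vec C_3$-free digraphs of order $n$. We may assume $D$ is simple. Loops have no effect on acyclic sets beyond excluding the looped vertex, and loops should be read as excluded by the paper's standing simplicity convention. Note also that a $\vec C_3$-free digraph may still contain digons.

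The key structural fact is as follows. For any vertex $v$, consider its out-neighbourhood $N^+(v)$ and in-neighbourhood $N^-(v)$. If $x \in N^+(v)$ and $y\in N^-(v)$, then $xy$ cannot be an arc, since otherwise $v\to x\to y\to v$ would be a $\vec C_3$. (Digons $v\leftrightarrow x$ put $x$ in both sets, and this needs care; I would first handle $N^+(v)\setminus N^-(v)$ and $N^-(v)\setminus N^+(v)$.)

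The step that gives the gain is this. If $S\subseteq N^+(v)$ is acyclic, then $S\cup\{v\}$ is acyclic. Indeed, any cycle through $v$ must leave $v$ into $S$ and return to $v$ from an in-neighbour of $v$ in $S$, which forces a vertex of $N^+(v)\cap N^-(v)$, i.e.\ a digon partner. So I would restrict to $N^+(v)\setminus N^-(v)$, or symmetrically to the pure in-neighbourhood, and then $v$ can be added. Similarly, all arcs between the pure out-neighbourhood $A$ and the pure in-neighbourhood $B$ go from $B$ to $A$. Hence an acyclic set in $A$ and an acyclic set in $B$ combine to an acyclic set of $A\cup B$, since no cycle can cross from $A$ back to $B$.

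This suggests the following recursion, stated as a dichotomy on the degrees. Either some vertex has large pure in- or out-neighbourhood, of size at least $n/k$, which gives $f(n)\ge f(n/k)+1$. Or every vertex has small pure neighbourhoods, and then the digraph is dense in digons or sparse. In the sparse case a greedy or degeneracy argument, via the $k$-degenerate colouring of Lemma~\ref{lem:colouring_degenerate}, gives an acyclic set of size about $n/(2n/k)=k/2$. The digon-dense case must then be handled separately. I would pass to the bidirected part and find, inside a large set of pairwise digon-free vertices, a further split.

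A cleaner approach, which I expect gives $\sqrt{\ln 4}$, is a two-branch recursion $f(n)\ge \min\{f(n/k)+1,\ \text{something like } 2f(\cdot)\}$. If some vertex has pure out- and in-neighbourhoods $A,B$ both large, then $f(n)\ge 2f(\min(|A|,|B|))$, using the $B\to A$ union observation. Otherwise, for every vertex one of the two sides is small. Optimising $k$ as $n$ varies, a recursion of the form $f(n)\ge f(n/k)+1$ combined with doubling steps yields growth $\exp(c\sqrt{\ln n})$. Writing $g(x)=\ln f(e^x)$, each step of "divide by $k$, add 1" versus "halve, double" balances when $\ln k\approx \sqrt{\ln n}$. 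Solving the resulting optimisation gives the exponent $\delta<\sqrt{\ln 4}$, with the $\ln 4$ arising from squaring-type losses of a factor $4$ per doubling.

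The main obstacle is the case where every vertex has one small pure side and many digons, i.e.\ the digon-heavy regime. There I would try to take a maximal set with no digons inside, or use that the digon graph contains an independent set of size $n/(\Delta+1)$, to reduce to the previous cases while losing only a constant factor in $n$. The constant $c_\delta$ absorbs the base cases and the rounding.
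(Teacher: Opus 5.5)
Your ingredients are the right ones: no arc from $N^+(v)$ to $N^-(v)$, gluing acyclic sets of the pure out- and in-neighbourhoods, and degeneracy when degrees are small. However, the proposal leaves unresolved exactly the case you call the main obstacle, and the remedies you suggest fail. An independent set of size $n/(\Delta+1)$ in the digon graph loses a factor $\Delta+1$, which can be of order $n$ rather than a constant, and a maximal digon-free set has the same problem. The missing observation is already contained in your own structural fact. Apply it with $x,y$ both in $N^+(v)\cap N^-(v)$: an arc $x\to y$ there would give the triangle $v\to x\to y\to v$. So the common neighbourhood $N^+(v)\cap N^-(v)$ spans no arc and is itself acyclic. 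The digon-heavy regime is therefore trivial. If $|N^+(v)\cap N^-(v)|$ exceeds the target bound, you are done outright. Otherwise the digons at $v$ only subtract a small additive amount from each side.

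The recursion also needs reorganising. A branch $f(n)\ge f(n/k)+1$ that may occur at every step is fatal: with $\ln k\approx\sqrt{\ln n}$, iterating it gives only $O(\sqrt{\ln n})$. In a recursion of the form $\min\{f(n/k)+1,\,2f(\cdot)\}$, nothing prevents this branch from being the minimum every time.

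Write $F(n)=\exp(\delta\sqrt{\ln n})$. The dichotomy that works is on $\max_v\min\{\mathrm{indeg}(v),\mathrm{outdeg}(v)\}$ with threshold $2n/F(n)$.
\begin{itemize}
\item Below the threshold, the degeneracy colouring gives $\vec\alpha(D)\ge n/(2n/F(n)+1)\ge F(n)/3$ directly, with no recursion. The one-sided situation you handle with the $+1$ step is absorbed here.
\item Above the threshold, take $v$ with both degrees at least $2n/F(n)$. Either $|N^+(v)\cap N^-(v)|>F(n)$, and you are done. Or both pure sides have at least $2n/F(n)-F(n)\ge n/F(n)$ vertices, and gluing gives $\vec\alpha(D)\ge 2c_\delta F(n/F(n))$.
\end{itemize}
Since $F(n/F(n))/F(n)\to e^{-\delta^2/2}$, the induction closes exactly when $2e^{-\delta^2/2}>1$, i.e.\ when $\delta^2<\ln 4$. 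That is where $\ln 4$ comes from, not from a loss of a factor $4$ per doubling.
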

\begin{proof} Let $f_{\delta}(n)\coloneqq\exp(\delta\sqrt{\ln n})$. Let $n_{\delta}$ be a positive integer, that we assume to be conveniently large. By taking $c_{\delta}$ small enough, the statement holds automatically for every digraph $D$ of order $n\leq n_{\delta}$.

Let us assume that $D$ has order $n\geq n_{\delta}$, and that the statement is true for every digraph of smaller order. We can assume that $\Delta_{\min}(D)\coloneqq\max_{v\in V(D)}\min\{\textrm{indeg}(v),\textrm{outdeg}(v)\}\geq 2n/f_{\delta}(n)$; otherwise
$$\vec\alpha(D)\geq\frac{n}{\vec\chi(D)}\geq\frac{n}{\Delta_{\min}(D)+1}\geq\frac{n}{2\frac{n}{f_{\delta}(n)}+1}\geq \tfrac{1}{3}f_{\delta}(n).$$
Let $v$ be a vertex with $\min\{\textrm{indeg}(v),\textrm{outdeg}(v)\}\geq 2n/f_{\delta}(n)$. Since $D$ is $\vec C_3$-free, there is no arc from $N^+(v)$ to $N^-(v)$. In particular, the set $N^+(v)\cap N^-(v)$ spans no arc, so $\vec\alpha(D)\geq |N^+(v)\cap N^-(v)|$. Hence, we can also assume that $|N^+(v)\cap N^-(v)|\leq f_{\delta}(n)$. Therefore, if $n^+$ and $n^-$ are the orders of the subgraphs $D^+$ and $D^-$ induced by $N^+(v)\setminus N^-(v)$ and $N^-(v)\setminus N^+(v)$, then $n^+,n^-\geq 2n/f_{\delta}(n)-f_{\delta}(n)\geq n/f_{\delta}(n)$. By applying the induction hypothesis on the disjoint subgraphs $D^+$ and $D^-$, one sees that
$$\vec\alpha(D)\geq\vec\alpha(D^+)+\vec\alpha(D^-)\geq c_{\delta}f_{\delta}(n^+)+ c_{\delta}f_{\delta}(n^-)\geq 2c_{\delta}f_{\delta}\left(\frac{n}{f_{\delta}(n)}\right)\geq c_{\delta}f_{\delta}(n),$$
where the last inequality follows from the computation
\[\lim_{x\rightarrow\infty}\frac{2f_{\delta}\left(\frac{x}{f_{\delta}(x)}\right)}{f_{\delta}(x)}
=2\lim_{x\rightarrow\infty}\exp\left(\delta\sqrt{\ln x-\delta\sqrt{\ln x}}-\delta\sqrt{\ln x}\right)\]
\[=2\exp\left\{\delta\lim_{x\rightarrow\infty}x\left(\sqrt{1-\frac{\delta}{x}}-1\right)\right\}=2\exp\left(\frac{-\delta^2}{2}\right)>1.\]
\end{proof}

Interestingly, when one forbids the transitive tournament on three vertices, the proof
is straightforward.

\begin{proposition}
Let $T$ be the transitive tournament on three vertices. Then, $T$ has the 
Strong Erd\H{o}s--Hajnal property. Moreover, for any $n$-vertex 
$T$-free simple digraph $D$ with maximum (total) degree $\Delta$, $\vec\chi(D)=O(\Delta/\log\Delta)$.
\end{proposition}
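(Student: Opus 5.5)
The plan has two parts. The Strong Erd\H{o}s--Hajnal property asks for $\vec\alpha(D)\geq n^{\varepsilon}$, which will come from an elementary degree dichotomy giving $\varepsilon=1/2$. The bound $\vec\chi(D)=O(\Delta/\log\Delta)$ will come from a Johansson-type semi-random colouring. Both rest on one structural observation about neighbourhoods in a $T$-free simple digraph $D$, where $T$ is the transitive tournament with arcs $a\to b$, $a\to c$, $b\to c$. Fix a vertex $v$. Then:
\begin{itemize}
\item $N^+(v)$ spans no arc, since an arc $x\to y$ inside it together with $v\to x$, $v\to y$ gives a copy of $T$.
\item Likewise $N^-(v)$ spans no arc.
\item There is no arc from $N^-(v)$ to $N^+(v)$: an arc $x\to y$ with $x\to v\to y$ is again a copy of $T$, with source $x$.
\item A vertex $x\in N^+(v)\cap N^-(v)$ has no arc to or from any other vertex of $N(v)$, by the same checks.
\end{itemize}
Hence every arc inside $N(v)=N^+(v)\cup N^-(v)$ goes from $N^+(v)\setminus N^-(v)$ to $N^-(v)\setminus N^+(v)$. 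In particular $N(v)$ is acyclic, and $|N(v)|\geq \deg(v)/2$, where $\deg$ is total degree.

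For the first assertion, let $\Delta$ be the maximum total degree. If $\Delta\geq \sqrt n$, a vertex of maximum degree has an acyclic neighbourhood of size at least $\sqrt n/2$. Otherwise the underlying graph has maximum degree less than $\sqrt n$, so greedy colouring gives an independent set, hence an acyclic set, of size at least $n/(\Delta+1)\geq \sqrt n/2$. So $\vec\alpha(D)\geq \tfrac12\sqrt n$, and $T$ has the Strong Erd\H{o}s--Hajnal property with $\varepsilon_T$ any constant below $1/2$, as Conjecture~\ref{conj: StrongEH} requires.

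For the colouring bound I would use the following local sufficient condition. Suppose $f$ is a colouring such that, for every vertex $v$, the colour $f(v)$ is absent from $N^+(v)$ or absent from $N^-(v)$. Then no directed cycle is monochromatic, because every vertex on a monochromatic cycle has both an in-neighbour and an out-neighbour of its own colour. So it suffices to find such a colouring with $O(\Delta/\log\Delta)$ colours.

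I would run the Johansson / Molloy--Reed nibble with this relaxed constraint: a colour $c$ is forbidden at $v$ only once it appears on both sides of $v$'s neighbourhood. The gain over the general case comes from the stability of $N^+(v)$ and $N^-(v)$. Neither side has internal arcs, so in a random partial colouring the colours on $N^+(v)$ (and separately on $N^-(v)$) repeat heavily. Concretely, the expected number of distinct colours seen on each side is a constant fraction smaller than its size, exactly as in the triangle-free analysis. The number of forbidden colours at $v$ is at most the size of the intersection of the two colour sets, which is smaller still. This keeps the ratio of available palette size to remaining "effective degree" improving at each round. Concentration would come from Talagrand's inequality and the local lemma, after $O(\log\Delta)$ rounds. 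A final greedy step, in the spirit of Lemma~\ref{lem:colouring_degenerate}, then finishes: once the effective degree is below the list size, each vertex still has a colour missing from one side of its neighbourhood.

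The main obstacle will be the nibble bookkeeping. The constraint ``$f(v)\notin f(N^+(v))$ or $f(v)\notin f(N^-(v))$'' is a disjunction over two sides rather than a simple proper-colouring constraint, so the tracked quantities must be, for each vertex $v$ and colour $c$, two separate ``$c$-degrees'' into $N^+(v)$ and $N^-(v)$. One then has to show that both stay controlled. My first attempt would be to reduce to the standard setting: at the start, for each vertex $v$, pick at random which side ($N^+(v)$ or $N^-(v)$) will be kept colour-disjoint from $v$. Colour-disjointness from that chosen side is a proper colouring requirement on a derived graph, and the stability of that side is precisely the local sparsity that Johansson's theorem exploits. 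If the two-sided structure makes this reduction too lossy, I would instead adapt the Alon--Krivelevich--Sudakov local-density version, using the fact that the derived graph's neighbourhoods induce stable sets.
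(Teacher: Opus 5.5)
\textbf{Part 1 is correct.} Your four neighbourhood checks show that $D[N(v)]$ is acyclic, and the degree dichotomy then gives $\vec\alpha(D)\ge\sqrt n/2$. That is enough for the Strong Erd\H{o}s--Hajnal property, and it is more elementary than the paper's route, which gets $\Omega(\sqrt{n\log n})$ from $R(3,t)$ applied to an auxiliary triangle-free graph.

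\textbf{The colouring part has a genuine gap.} Look at what your local condition actually demands. Suppose every vertex of a colour class $S$ is a source or a sink of $D[S]$. Then every arc inside $S$ goes from a source to a sink, so $S$ induces a bipartite subgraph of the underlying graph $G$. Hence a $k$-colouring of your type gives $\chi(G)\le 2k$, and your route must in effect prove $\chi(G)=O(\Delta/\log\Delta)$ for the underlying undirected graph. Johansson's theorem does not give this:
\begin{itemize}
\item Directed triangles are allowed in $T$-free digraphs, so $G$ contains triangles.
\item The neighbourhoods of $G$ can be dense. In the blow-up of $\vec C_3$ with independent parts $A\to B\to C\to A$ of size $m$, each $N(v)$ spans a complete bipartite $K_{m,m}$ while $\Delta=2m$. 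So Alon--Krivelevich--Sudakov does not apply either.
\end{itemize}
Your concrete reduction fails for exactly this reason. In the derived graph, a directed triangle $v\to x\to y\to v$ becomes a triangle whenever $s(v)=s(x)=s(y)$, which happens with probability $1/4$. In the blow-up example, the derived neighbourhoods typically span $\Theta(\Delta^2)$ edges. So the claim that the derived graph's neighbourhoods induce stable sets is false. The remaining nibble paragraph (colours ``repeat heavily'', the ratio ``keeps improving'') is a heuristic, not an argument.

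\textbf{The missing idea} is to impose the constraint per arc, through a global order, rather than per vertex. Fix any linear order of $V(D)$ and let $G_f$ be the undirected graph formed by the forward arcs.
\begin{itemize}
\item A triangle of $G_f$ on $u<v<w$ consists of $u\to v$, $v\to w$, $u\to w$, which is a copy of $T$. So $G_f$ is triangle-free, with maximum degree at most $\Delta$.
\item Every independent set of $G_f$ is acyclic in $D$, because all arcs inside it are backward.
\end{itemize}
Applying Johansson's theorem to $G_f$ gives $\vec\chi(D)=O(\Delta/\log\Delta)$ at once. This is the paper's proof, and the same graph yields the first part via triangle-free Ramsey bounds.

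A proper colouring of $G_f$ does not in general satisfy your local condition: a class may contain a path $u\to v\to w$ with $u>v>w$. That is precisely why it escapes the obstruction above.
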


\begin{proof}
Let $D$ be a $T$-free digraph.
Take an ordering of vertices, and form a graph $G_f$ from the digraph $D$
by only keeping the forward edges. We remark that since $D$ does
not contain $T$, $G_f$ is triangle-free. Note that an independent set
in $G_f$ is an acyclic set in $D$. Now, since $G_f$ is triangle-free, by Ramsey Theory (the fact that $R(3,t) \sim t^2 / \log t$, see~\cite{K1995}), it follows that
$\alpha(G_f) = \Omega(\sqrt{n \log n})$. Thus, there is an acyclic set of size $\Omega(\sqrt{n \log n})$
in $D$. The second part of the claim follows from Johansson's theorem on colorings~\cite{J1996,MR2002} applied to the triangle-free graph $G_f$.
\end{proof}

\section{Random regular digraphs}

A (multi)digraph is \emph{$r$-regular} if every vertex has exactly $r$ in-arcs and $r$ out-arcs. For random $r$-regular $n$-vertex oriented graphs, 
we prove that the size of the largest acyclic set is $\Theta(n\ln r/r)$ whp; see Theorems~\ref{thm.ubreg} and \ref{thm.lbreg2}. 
This matches 
the behavior of the binomial random oriented graph $D_{n,p}$ with $p=r/n$, where each vertex has expected indegree and expected outdegree $r(1-1/n)$. (Recall that $D_{n,p}$ was introduced immediately after Theorem~\ref{thm:ub_zeta}.)
Indeed, by a result of Spencer and Subramanian (see Corollary~1.1 in \cite{SS2008}), as $r\rightarrow\infty$,
\[\vec\alpha(D_{n,r/n})=(1+o(1))\, \tfrac{2n\ln r}{r}  \;\;\; \mbox{whp.} \]

Random regular graphs can be constructed by means of the configuration model \cite[Section 2.4]{B2001}. Let $n$ and $r$ be positive integers such that $nr$ is even. For each vertex $i\in[n]$ we create an $r$-set $G[i]$, where the sets $G[1],\ldots,G[n]$ are pairwise disjoint. 
We then put a uniformly random pairing (a \emph{configuration}) between all the elements of $\cup_{i=1}^n G[i]$. 
Let $G^{*}(n,r)$, or simply $G^*$, be the $r$-regular multigraph obtained on vertex set $[n]$, where there is an edge $ij$ for each element of $G[i]$ that is paired with an element of $G[j]$. For $r$ fixed and $n>r$, the probability that $G^*$ is simple is bounded away from $0$ by a constant not depending on $n$; moreover, every $r$-regular $n$-vertex (simple) 
graph has the same probability of appearing as $G^*$ \cite{B2001}. Let us denote by $\mathscr G^*(n,r)$ the set of all $r$-regular $n$-vertex multigraphs with vertex set $[n]$, by $\mathscr G(n,r)$ the subset of (simple) 
$r$-regular $n$-vertex graphs, and by $P_{n,r}$ the probability measure on 
$\mathscr G^*(n,r)$ associated with $G^*$.

In the directed setting, we consider the following analogue of the configuration model. Let $n$ and $r$ be positive integers.
For each vertex $i\in[n]$ we create two $r$-sets: $D^+[i]$ and $D^-[i]$, where $D^+[1],D^-[1],\ldots,D^+[n],D^-[n]$ are pairwise disjoint. 
We denote by $D^+$ and $D^-$ the unions $\cup^n_{i=1} D^+[i]$ and $\cup_{i=1}^n D^-[i]$, respectively. 
Next, 
we put a pairing between the elements of $D^+$ and $D^-$ (a \emph{directed configuration}), uniformly at random. 
Let $D^*(n,r)$, or simply $D^*$, be the $r$-regular multidigraph obtained on vertex set $[n]$,  
where there is an arc $ij$ for each element $u \in D^{+}[i]$ which is paired with some $v \in D^{-}[j]$.
Here, the different elements of a fixed $r$-set $D^+[i]$ or $D^-[i]$ play an undistinguishable role, 
so permuting them does not affect the resulting multidigraph. More precisely, these permutations generate a group that acts on the set of all directed configurations, and each orbit corresponds to (an $r$-regular $n$-vertex) multidigraph $D$.
Thus, $D$ arises from exactly $r!^{2n}\Pi_{a\in A(D)}\tfrac{1}{\mathrm{mult}(a)!}$ directed configurations, where $A(D)$ is the set of arcs of $D$ and $\mathrm{mult}(a)$ is the multiplicity of the arc $a$. In particular, every simple 
digraph has the same probability of appearing as $D^*$.

If we forget the orientations of the arcs of $D^*$, we then obtain a $2r$-regular $n$-vertex multigraph that we call $\mathrm{forg}\,D^*$.
 We denote by $Q_{n,r}$ the probability measure on 
$\mathscr G^*(n,2r)$ associated with $\mathrm{forg}\,D^*$.

\begin{remark}\label{rem:q_measures}\emph{ We can establish a link between $\mathrm{forg}\,D^*(n,r)$ and $G^*(n,2r)$ through the enumeration of Eulerian orientations. When considering orientations of multigraphs, we have to clarify whether the edges are labelled or not. Unless specified, we will make no distinction between multiple copies of the same edge. An \emph{Eulerian orientation} of a multigraph $G$ is an orientation $D$ of $G$ such that $\mathrm{indeg}_D(v)=\mathrm{outdeg}_D(v)$ for every vertex $v$. Let $E^*_{n,r}(G)$ be the number of labelled (i.e.~edges are labelled) Eulerian orientations of $G\in\mathscr G^*(n,r)$, with the convention that loops can be oriented in two ways. Then, for all positive integers $n$ and $r$,
\begin{equation} \label{eqn.forg}
\frac{E^*_{n,2r}}{\EE[E^*_{n,2r}]}=\frac{Q_{n,r}}{P_{n,2r}},
\end{equation}
where the expectation is taken on $G^*(n,2r)$.
\begin{proof}[Proof of~(\ref{eqn.forg})] Note that in the configuration model with parameters $n$, $2r$ there are $c_{n,2r}:=\tfrac{(2nr)!}{(nr)!2^{nr}}$ possible pairings, and in the directed version of the configuration model with parameters $n$, $r$ there are $d_{n,r}:=(nr)!$ possible pairings. Thus, we see that, for every $2r$-regular $n$-vertex multigraph $G$,
\[P_{n,2r}(G)=\frac{(2r)!^n}{2^{\ell(G)}c_{n,2r}}\prod_{e\in E(G)}\frac{1}{\mathrm{mult}(e)!},\]
where $\ell(G)$ is the number of loops of $G$ and $E(G)$  is its set of edges. Similarly,
\[Q_{n,r}(G)=\frac{r!^{2n}}{d_{n,r}}\sum_{D\in\mathrm{EO}(G)}\prod_{a\in A(D)}\frac{1}{\mathrm{mult}(a)!},\]
where $\mathrm{EO}(G)$ is the set of Eulerian orientations of $G$ and $A(D)$ is the set of arcs of $D$. On the other hand,
\[E^*_{n,2r}(G)=2^{\ell(G)}\sum_{D\in\mathrm{EO}(G)}\prod_{e\in E'(G)}\binom{\mathrm{mult}_G(e)}{\mathrm{mult}_D(e^+)},\]
where $E'(G)$ is the set of non-loop edges of $G$ and, for each $e\in E'(G)$, $e^+$ is a fixed orientation of $e$ (notice that the previous expression is independent of this choice). The claim follows from the fact that 
\[\EE[E^*_{n,2r}]=\frac{2^{nr}\binom{2r}{r}^n}{\binom{2nr}{nr}}\]
(see the proof of Theorem 3.47 in~\cite{C2010}: there, $E^*_{n,2r}$ is defined in an alternative way).
\end{proof}
}
\end{remark}

Parallel to the undirected case, $D^*$ is an oriented 
graph with probability at least a positive constant. We could not find this result in the literature, so we prove it here for completeness. (In contrast, the probability that $D^*$ is simple has been studied; see for instance~\cite[Proposition 4.2]{CO-C2013}.)

\begin{lemma}\label{lem:constant_prob}
Fix a positive integer $r$ and, for every positive integer $i$, let $\mu_i=\tfrac{(2r-1)^i+1}{2i}$. Then,
$$\lim_{n\rightarrow\infty}\PP(D^*(n,r)\emph{\textrm{ is an oriented (simple) graph}})=e^{-\mu_1-\mu_2}.$$
\end{lemma}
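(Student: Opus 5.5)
We follow the standard method of moments argument for the configuration model. For each $i\geq 1$, let $X_i$ be the number of cycles of length $i$ in the underlying multigraph $\mathrm{forg}\,D^*$. Here a cycle of length $1$ is a loop, a cycle of length $2$ is a pair of parallel edges, and for $i\ge 3$ cycles are counted as usual, each traversal of an $i$-cycle being an arc sequence that may go forward or backward. The event that $D^*$ is an oriented simple graph is exactly the event that $D^*$ has no loops, no parallel arcs and no digons. This is the same as $X_1=X_2=0$, since two edges joining the same pair of vertices in $\mathrm{forg}\,D^*$ are either parallel arcs or a digon. So it suffices to show that $(X_1,\dots,X_k)$ converges jointly in distribution to independent Poisson variables with means $\mu_1,\dots,\mu_k$; we only need $k=2$.

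For convergence we use factorial moments. By the standard multivariate criterion, it is enough to prove that
\[\EE\big[(X_1)_{a_1}(X_2)_{a_2}\big]\to\mu_1^{a_1}\mu_2^{a_2}\]
for all fixed $a_1,a_2\ge0$. First we compute $\EE[X_i]$. A potential $i$-cycle is determined by three choices:
\begin{itemize}
\item a cyclic sequence of $i$ distinct vertices, giving $(n)_i/(2i)$ choices up to rotation and reflection;
\item an orientation pattern of the $i$ edges, giving $2^i$ choices (arc $v_jv_{j+1}$ or $v_{j+1}v_j$);
\item the points used at each vertex.
\end{itemize}
At each vertex the two incident cycle edges use either one out-point and one in-point, which happens in two patterns with $r^2$ point choices each, or two out-points or two in-points, with $r(r-1)$ point choices each. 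The resulting $i$ pairings each have probability about $1/(nr)$. Summing over orientation patterns is a transfer-matrix calculation: each edge carries one of two states, and at each vertex the weight is $r^2$ if the two incident edges are oriented consistently and $r(r-1)$ otherwise. The $2\times2$ matrix $\begin{pmatrix} r^2 & r(r-1)\\ r(r-1) & r^2\end{pmatrix}$ has eigenvalues $r(2r-1)$ and $r$. Its trace raised to the power $i$, divided by $r^i$, gives $(2r-1)^i+1$, and hence $\EE[X_i]\to\frac{(2r-1)^i+1}{2i}=\mu_i$. For $i=1$ a loop needs one out-point and one in-point at the same vertex, so $\EE X_1=nr^2/(nr)=r$, which matches $\mu_1=2r/2$. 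For $i=2$ the count $\binom{n}{2}\big[\binom r2^2\cdot 2+r^4\big]/(nr)^2\cdot 2$ should be checked to also reduce to $((2r-1)^2+1)/4$.

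For the higher factorial moments we count ordered $(a_1+a_2)$-tuples of distinct such cycles. Tuples of vertex-disjoint cycles contribute $(1+o(1))\prod\mu_i^{a_i}$, because the probability of $\ell$ specified disjoint pairings in a uniform bipartite matching of size $nr$ is $1/(nr)_\ell=(1+o(1))(nr)^{-\ell}$. Tuples whose cycles share a vertex span a union with fewer vertices than edges. Their number is $O(n^{v})$ with $v<e$, while their probability is $O(n^{-e})$, so their total contribution is $O(1/n)$.

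The main obstacle is bookkeeping rather than ideas. We must check that the pairing probabilities in the directed configuration, where out-points are matched only to in-points, give exactly the factor $(nr)^{-\ell}$ in the disjoint case, and that the orientation-weight count produces the $+1$ in $\mu_i$. Once both are checked, the Poisson limit gives $\PP(X_1=X_2=0)\to e^{-\mu_1-\mu_2}$, which is the statement of Lemma~\ref{lem:constant_prob}.
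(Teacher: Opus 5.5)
Your argument is correct, but it takes a different route from the paper.

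\textbf{How the paper argues.} The paper does no cycle counting in the directed configuration model. It shows (Remark~\ref{rem:q_measures}) that the law $Q_{n,r}$ of $\mathrm{forg}\,D^*(n,r)$ has density $E^*_{n,2r}/\EE[E^*_{n,2r}]$ with respect to the law $P_{n,2r}$ of $G^*(n,2r)$, where $E^*_{n,2r}$ counts labelled Eulerian orientations. Hence the factorial moments of the cycle counts under $Q_{n,r}$ equal the Eulerian-orientation-weighted factorial moments under $G^*(n,2r)$. These were already shown in \cite{C2010} (Lemma 3.51) to tend to $\prod_i \mu_i^{j_i}$, and the method of moments finishes.

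\textbf{How you argue.} You compute the factorial moments directly. Your transfer matrix, with eigenvalues $r(2r-1)$ and $r$, gives the exact formula $\EE[X_i]=\frac{(n)_i}{2i}\cdot\frac{r^i((2r-1)^i+1)}{(nr)_i}$. Read with the formal weight $(n)_i/(2i)$, this formula is also correct for $i=1,2$. Disjoint tuples factor because $\ell$ prescribed out--in pairings occur with probability exactly $1/(nr)_\ell$. Tuples of distinct cycles sharing a vertex span more edges than vertices, so they contribute $O(1/n)$.

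\textbf{What each route buys.} Yours is self-contained and explains the shape of $\mu_i$: the $+1$ comes from the second eigenvalue $r$. The cost is doing the routine overlap estimate yourself. The paper outsources all cycle combinatorics to \cite{C2010}. The price is the Eulerian-orientation change-of-measure identity, with its multiplicity and loop bookkeeping, and the value of $\EE[E^*_{n,2r}]$ taken from the literature.

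\textbf{One slip.} Your explicit $i=2$ count is off, because the trailing factor $2$ also doubles the digon term $r^4$. As written, your expression tends to $\frac{(r-1)^2}{2}+r^2$, not $\mu_2$. The correct count is as follows:
\begin{itemize}
\item parallel arcs in a fixed direction contribute $2\binom r2^2$ (two out-points, two in-points, and one of two matchings);
\item digons contribute $r^4$, counted once.
\end{itemize}
So $\EE[X_2]=\binom n2\bigl[4\binom r2^2+r^4\bigr]/(nr)_2\to\frac{(r-1)^2+r^2}{2}=\mu_2$. Your trace formula already gives this value, so the argument is unaffected.
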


\begin{proof} Given integers $k$, $j$, let us denote $k(k-1)\ldots(k-j+1)$ by $(k)_j$. For every positive integer $i$, let $X_{i,n}$ be the random variable counting the number of cycles of length $i$ in $G^*(n,2r)$. It is shown in~\cite[Lemma 3.51]{C2010} that
\[\lim_{n\rightarrow\infty}\frac{\EE[E^*_{n,2r}(X_{1,n})_{j_1}\ldots(X_{k,n})_{j_k}]}{\EE [E^*_{n,2r}]}=\prod_{i=1}^k \mu_i^{j_i}\]
for any set of non-negative integers $j_1,\ldots,j_k$, where the expectation is taken on $G^*(n,2r)$. By Remark~\ref{rem:q_measures},
\[\frac{\EE[E^*_{n,2r}(X_{1,n})_{j_1}\ldots(X_{k,n})_{j_k}]}{\EE [E^*_{n,2r}]}=\EE\left[\tfrac{Q_{n,r}}{P_{n,2r}}(X_{1,n})_{j_1}\ldots(X_{k,n})_{j_k}\right]=\]
\[\sum_{G\in\mathscr G^*(n,2r)}\!\left(\tfrac{Q_{n,r}}{P_{n,2r}}(X_{1,n})_{j_1}\ldots(X_{k,n})_{j_k}\right)\!(G)P_{n,2r}(G)=\EE_{Q_{n,r}}[(X_{1,n})_{j_1}\ldots(X_{k,n})_{j_k}],\]
where $\EE_{Q_{n,r}}$ is the expectation taken on $\mathrm{forg}\, D^*(n,r)$. Therefore, by the method of moments (see~\cite[Theorem 6.10]{JLR}), under the measures $Q_{n,r}$, $X_{i,n}\overset{d}{\rightarrow}\tilde X_i$ as $n\rightarrow\infty$, jointly for all $i$, where $\tilde X_i\in\mathrm{Po}(\mu_i)$ are independent Poisson random variables (see also~\cite[Lemma 9.17]{JLR}). Hence,
\[\lim_{n\rightarrow\infty}\PP(D^*(n,r) \text{ is an oriented graph})\] 
\[=\lim_{n\rightarrow\infty}\PP(\mathrm{forg}\,D^*(n,r)\in\mathscr G(n,2r))=\lim_{n\rightarrow\infty}Q_{n,r}(\mathscr G(n,2r))\]
\[=\lim_{n\rightarrow\infty} Q_{n,r}(X_{1,n}=X_{2,n}=0)=e^{-\mu_1-\mu_2}.\]
\end{proof}

We say that events $A_1,A_2,\ldots$ hold \emph{with very high probability (wvhp)}
 if $\PP(A_n) = 1-e^{-\Omega(n)}$ as $n \to \infty$.

\begin{theorem} \label{thm.ubreg}
Let $r$ be a positive integer and let $D$ be a random digraph, chosen uniformly among all $r$-regular $n$-vertex 
oriented graphs 
with labelled vertices.
 Then, $\vec{\alpha}(D) \leq \frac{2\ln r+4}{r}n$ wvhp.
\end{theorem}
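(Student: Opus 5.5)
We prove the bound by a first moment computation in the directed configuration model $D^*(n,r)$, and then transfer it to the uniform $r$-regular oriented graph using Lemma~\ref{lem:constant_prob}. Every $r$-regular oriented graph occurs as $D^*$ with the same probability. Moreover, $D^*$ is an oriented graph with probability bounded below by a positive constant $c_r$. Hence, for any event $\mathcal E$,
\[\PP_{\mathrm{unif}}(\mathcal E)\le \PP(D^*\in\mathcal E)/\PP(D^*\text{ oriented})\le c_r^{-1}\PP(D^*\in\mathcal E).\]
So it suffices to show that $D^*$ has an acyclic set of size $k:=\lceil\frac{2\ln r+4}{r}n\rceil$ with probability $e^{-\Omega(n)}$. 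We may assume $k\le n$; otherwise the statement is trivial. Note that $k\le n$ forces $r$ to be moderately large.

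Fix a $k$-set $S$. If $S$ is acyclic in $D^*$, there is a linear order $\sigma$ of $S$ such that every arc inside $S$ goes forward. We bound the probability of this for a fixed $\sigma$ and then take a union bound over the $k!$ orders and $\binom nk$ sets. For fixed $S,\sigma$, we expose the pairing point by point. Process the $kr$ out-points of $S$ in order of $\sigma$, starting from the last vertex. When the out-points of the vertex in position $j$ are exposed, there are $(k-j)r$ in-points of $S$ that belong to vertices not after it. Some of these may already have been used by earlier exposures, so this step needs care; a cleaner approach follows. The required event is that none of these out-points lands on a forbidden in-point. Multiplying the conditional probabilities gives roughly
\[\prod_{j}\Bigl(1-\frac{\text{(forbidden in-points)}}{nr-\text{(used)}}\Bigr)^{r}\approx \exp\Bigl(-\frac{r}{n}\cdot\frac{k^2}{2}\Bigr).\]

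The cleaner approach is to count configurations directly. Exactly $m$ arcs lie inside $S$, all forward, and we can compute the probability as a ratio of factorials. Alternatively, we can argue through a martingale or switching comparison with independent choices: each of the $kr$ out-points of $S$ is matched to a given in-point with probability at most about $1/(nr-kr)$. In any version, the target bound for a fixed $(S,\sigma)$ is $\exp\bigl(-(1-o(1))\frac{rk^2}{2n}\bigr)$. We would also keep an extra factor accounting for the at most $\binom{k}{2}$ pairs, which corresponds to the $2^{-\binom k2}$ in the tournament case and is where the factor $2$ in $2\ln r$ comes from.

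The union bound then gives
\[\binom nk k!\,\exp\Bigl(-\frac{rk^2}{2n}(1-o(1))\Bigr)\le \exp\Bigl(k\ln n-\frac{rk^2}{2n}\Bigr).\]
This is too weak as written, since it gives $k\approx 2n\ln n/r$ rather than $2n\ln r/r$. The fix is the standard trick for $D_{n,p}$. We do not use all $k!$ orders. Instead, we use the fact that an acyclic set has a topological order, and we bound the expected number of pairs $(S,\sigma)$ with no backward arc. Here the backward-arc-free event has probability $\exp(-rk^2/(2n))$, while $\binom nk\le (en/k)^k$ and $k!\le k^k$ combine to $(en)^k$. This still carries a $\ln n$, so the correct count must exploit the fact that for many orders the forward arcs must also be present.

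I would therefore follow Spencer--Subramanian. Count pairs $(S,\sigma)$ such that, in addition, consecutive vertices in some canonical decomposition are forced. Alternatively, and more simply, count acyclic sets via their \emph{number of arcs}. An acyclic set spans $e(S)$ arcs, and the number of acyclic orientations compatible with them is at most $k!/\!\cdots$. This is the step I expect to be the main obstacle: replacing the $k!$ by something like $(\frac{n}{k}\cdot r)^{O(k)}$. This is the step that reduces $\ln n$ to $\ln(rk/n)\approx\ln\ln r$ and makes the exponent $k\bigl(\ln\frac{en}{k}+\ln\frac{ek r}{n}\bigr)-\frac{rk^2}{2n}$. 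With $k=\frac{(2\ln r+4)n}{r}$, this exponent becomes $k\bigl(\ln r+O(\ln\ln r)+2-\ln r-2\bigr)$ and is negative of order $\Omega(n)$. The constants $+4$ would be tuned to absorb the $O(1)$ terms.
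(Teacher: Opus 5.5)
\textbf{There is a genuine gap, at exactly the step you flag as the main obstacle.} Your first-moment count over pairs $(S,\sigma)$, with $\sigma$ a full linear order, costs $\binom nk k!\le n^k$. It therefore only rules out acyclic sets larger than about $2n\ln n/r$. What follows in your proposal (``forward arcs must also be present'', a Spencer--Subramanian canonical decomposition, counting by the number of arcs, a target $(nr/k)^{O(k)}$) is a list of possibilities rather than an argument, and none of them is what is needed:
\begin{itemize}
\item A bound $(nr/k)^{O(k)}$ with an unspecified constant in the exponent cannot produce the constant $2$ in $2\ln r$.
\item There is no extra factor to keep for the $\binom k2$ pairs. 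The no-backward-arc probability $\exp(-rk^2/(2n))$ already is the analogue of $(1-p)^{\binom k2}$.
\item Even granting your heuristic exponent $k(\ln\frac{en}{k}+\ln\frac{ekr}{n})-\frac{rk^2}{2n}$, at $k=\frac{2\ln r+4}{r}n$ the $\ln\ln r$ terms cancel exactly and the exponent equals $0$. So it would not give $e^{-\Omega(n)}$ as claimed.
\end{itemize}

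\textbf{The missing idea is to coarsen the topological order instead of counting all of it.} Write $\ell$ for the set size and fix a number $q$ of blocks of order $\ln r$. Cut a topological order of an acyclic $\ell$-set into $q$ consecutive blocks $A_1,\dots,A_q$, each of size $\ell/q$. This gives an ordered partition with no arc from $A_j$ to $A_i$ for $j>i$.
\begin{itemize}
\item \emph{Counting.} There are at most $\binom n\ell q^\ell\le (eqn/\ell)^\ell$ such (set, partition) pairs. Since $qn/\ell\approx r/2$, this costs only $\ell(\ln r+O(1))$ in the exponent.
\item \emph{Probability.} Expose the out-points of $A_q,A_{q-1},\dots$ in turn, conditioning on the previously exposed blocks having no backward arcs. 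Then the in-points of all lower blocks are still unmatched, which also disposes of your worry about already-used points. The probability is at most $\exp\bigl(-r(1-\frac1q)\ell^2/(2n)\bigr)$.
\item \emph{Loss from coarsening.} Discarding the within-block pairs loses only $\frac{\beta r}{2q}\ell\le\ell$ in the exponent, when $\ell\ge\beta n$ and $q\ge\beta r/2$.
\end{itemize}
The union-bound exponent is then $\ell\bigl(1+\ln q-\ln\beta-\frac{\beta r}{2}(1-\frac1q)\bigr)$. Take $\beta=\frac{2\ln(3r/4)+4}{r}$ and $q=\lceil\beta r/2\rceil$, so that $q<\frac{3\beta r}{4}$; then this exponent is a negative constant (depending only on $r$) times $\ell$. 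This is the paper's argument. Combined with your correct transfer from $D^*$ to the uniform oriented graph via Lemma~\ref{lem:constant_prob}, it completes the proof.
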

\begin{proof}
Let $D^*$ be the random $r$-regular $n$-vertex multidigraph obtained with the directed version of the configuration model. By Lemma~\ref{lem:constant_prob}, $D^{*}$ is an oriented graph 
(i.e., has no parallel arcs, loops, or digons) with probability bounded away from $0$ as $n\rightarrow\infty$. 
Moreover, 
every $r$-regular $n$-vertex oriented graph 
has the same probability of appearing as $D^{*}$. Thus, it suffices to prove that $\mathbb P(\vec{\alpha}(D^*) \geq \frac{2\ln r+4}{r}n ) =e^{-\Omega(n)}$ as $n\rightarrow\infty$.

Let $k$ be a positive integer and $0 < \beta < 1$
 a real number, for now both of them unspecified. Let $\ell$ be the integer divisible by $k$ in the interval $[\beta n,\beta n+k)$. Suppose that some $A \subseteq V(D^{*})$ of size $|A| = \ell$ is acyclic. Then, there is an ordering of $A$ $\sigma: \{1,\ldots, \ell\} \to A$ such that each arc in $D^{*}[A]$ is of the form $\sigma(i) \to \sigma(j)$ for some $i < j$. This implies that $A$ can be partitioned into $k$ subsets $A_1,\ldots, A_k$ in a way that

\begin{itemize}
	\item[(a)] $|A_i|= \frac{\ell}{k} $;
	\item[(b)] for each pair $1 \leq i < j \leq k$, there is no arc from any element of $A_j$ to any element of $A_i$. 
\end{itemize}

If $\vec{\alpha}(D^{*}) \geq \ell$, then for one of the $\ell$-subsets $A$ of $V(D^*)$ the above condition must hold. The number of 
ways to choose $A$ is $\binom{n}{\ell}$ and the number of ways to partition such a set $A$ into $k$ parts 
$A_1,\ldots, A_k$ is easily at most $k^{\ell}$. Thus, in total there are at most $\binom{n}{\ell} k^{\ell}\leq(\tfrac{ekn}{\ell})^{\ell}$ choices. 

Now, let us assume that we have fixed a set $A \subseteq V(D^{*})$ with $|A| = \ell$, and a partition $A_1,\ldots, A_k$ of $A$ with $|A_i|= \frac{\ell}{k}$. Without loss of generality, we may assume that $A=\{1,\ldots, \ell\}$ and that $A_1 = \{1,\ldots, \frac{\ell}{k} \},\ldots,A_k = \{(k-1)\frac{\ell}{k} + 1,\ldots, \ell \}$. We would like to compute the probability that there is no \emph{backward} arc, i.e., no arc from $A_j$ to $A_i$ for any $j > i$. Let $E_1$ be the event that there is no arc from $A_k$ to any of the $A_i$, for all $i < k$.
Clearly, \[\mathbb P(E_1) = \prod_{j=0}^{r\frac{\ell}{k}-1} \frac{rn - r(k-1)\tfrac{\ell}{k}-j}{rn-j} \leq \left(1- \frac{(k-1)\ell}{kn}\right)^{\frac{r\ell }{k}}.\]  
In general, let $E_i$ be the event that no vertex of $A_{k-i+1}$ has a backward arc.
Then
\begin{eqnarray*}
\mathbb P(E_i \,|\, E_1,\ldots,E_{i-1})
& = &
\prod_{j=0}^{r\frac{\ell}{k} - 1}\frac{rn - r (i-1)\tfrac{\ell}{k} - r (k-i) \tfrac{\ell}{k} - j}{rn - r(i-1)\tfrac{\ell}{k} -j}\\
& \leq & 
\left(1 - \frac{ (k-i)\tfrac{\ell }{k}}{n- (i-1)\tfrac{\ell}{k}} \right)^{\frac{r \ell}{k}} 
\, \leq \;\; \left(1-\frac{(k-i)\ell}{kn}\right)^{\frac{r\ell}{k}}\\
& \leq &
\exp\left(-\frac{r(k-i)\ell^2}{k^2 n}\right).
\end{eqnarray*}
Thus 
the probability that $A$ with the partition $A_1,\ldots,A_k$ satisfies (b) is at most
\[ 
 \exp\left(- \sum_{i=1}^k \frac{r(k-i)\ell^2}{k^2 n}\right) =  \exp\left(-\frac{r(1-\frac{1}{k})\ell^2}{2n}\right).\]
Hence, 
 the probability that there is an acyclic set of size $\ell$ is at most
\[\left(\frac{ekn}{\ell}\right)^{\ell}\exp\left(-\frac{r(1-\frac{1}{k})\ell^2}{2n}\right)\leq\exp\left\{\ell\left(1+\ln k-\ln\beta-\frac{\beta r}{2}\left(1-\tfrac{1}{k}\right)\right)\right\},\] 
where we used the facts that $\frac{ekn}{\ell} \leq \frac{ek}{\beta}$
and $r(1-\frac{1}{k}) \ell^2 \geq r(1-\frac{1}{k})\beta n \ell$. 

Now, we fix $\beta=\tfrac{2\ln(3r/4)+4}{r}$ and $k=\left\lceil\beta r/2\right\rceil$. Clearly, we can assume that $r\geq 2$. This implies that $\beta>\tfrac{4}{r}$, so we have the bound $k<\tfrac{\beta r}{2}+1<\tfrac{3\beta r}{4}$. Denote by $c_r:=1+\ln k-\ln\beta-\frac{\beta r}{2}\left(1-\tfrac{1}{k}\right)$ and note that $c_r<1+\ln\tfrac{3r}{4}-\tfrac{\beta r}{2}+1=0$. Moreover, note that $c_r$ is independent of $n$. Thus, for $n$ large enough, $$\mathbb P(\vec\alpha(D^*)\geq\tfrac{2\ln r+4}{r}n )\leq
\mathbb P(\vec\alpha(D^*)\geq\beta n+k)\leq\mathbb P(\vec\alpha(D^*)\geq\ell)\leq e^{c_r\ell}\leq e^{c_r \beta n},$$
which completes the proof.
\end{proof}

Unfortunately, the bound of Theorem~\ref{thm.ubreg} is meaningless for small $r$. It makes sense to push the analysis above to try to find a constant $c<1$ such that wvhp $\vec\alpha(D)\leq cn$. Note that we cannot expect that to work for $r=1$. Indeed, it is well-known that the number of cycles of the uniform random permutation $\pi\in S_n$ is concentrated around its mean $1+\tfrac{1}{2}+\tfrac{1}{3}+\ldots+\tfrac{1}{n}$~\cite[Example III.4]{FS2009}. It follows that, when $r=1$, $\vec\alpha(D^*)=n-(1+o(1))\ln n$ with probability tending to $1$ as $n\rightarrow\infty$. The next proposition shows that, for $r\geq 2$, one can already take $c=99/100$. In any case, the bounds from Theorems~\ref{thm.ubreg} and~\ref{thm.lbreg2} are still far from each other, and bringing them closer remains an open problem.

\begin{proposition}\label{prop:r2}
Let $r\geq 2$ be an integer and let $D$ be a random digraph,
chosen uniformly among all $r$-regular $n$-vertex oriented graphs with labelled vertices. Then, $\vec\alpha(D)\leq\frac{99}{100}n$ wvhp.
\end{proposition}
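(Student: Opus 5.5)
The plan is a first-moment argument in the directed configuration model, sharpening the proof of Theorem~\ref{thm.ubreg}. As there, Lemma~\ref{lem:constant_prob} shows that $D^*=D^*(n,r)$ is an oriented graph with probability bounded away from $0$. Every $r$-regular oriented graph is equally likely to appear as $D^*$. So it suffices to show $\PP(\vec\alpha(D^*)\ge \ell)=e^{-\Omega(n)}$ for $\ell=\lceil\tfrac{99}{100}n\rceil$. Since an acyclic set of size $\ge\ell$ contains one of size exactly $\ell$, I would bound the expected number of acyclic $\ell$-sets.

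Choosing $A$ costs only $\binom{n}{\ell}\le e^{nH(1/100)}\approx e^{0.056n}$, where $H$ is the binary entropy in nats. So the whole task is to show that, for a fixed $A$ with $|A|=\ell$, $\PP(D^*[A]\text{ acyclic})\le e^{-cn}$ for some $c>0.06$, uniformly for $r\ge2$.

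The equal-parts partition used in Theorem~\ref{thm.ubreg} is far too lossy here. Its count $k^\ell$ already dominates, since $\ln 2\approx0.69$ while the probability saving for $k=2$ and $r=2$ is only about $0.5\ell$. I would therefore replace it by a layered exposure of the configuration. Given $A$ acyclic, define $L_1$ to be the sinks of $D^*[A]$, $L_2$ the sinks after deleting $L_1$, and so on. This decomposition is determined by $D^*$, so we only pay for the layer sizes $s_1,s_2,\dots$, not for orderings.

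For each layer, all $r|L_i|$ out-points of $L_i$ must be paired with in-points lying in $V\setminus A$ or in $L_1\cup\dots\cup L_{i-1}$. Moreover, each vertex of $L_i$ with $i\ge2$ must send at least one arc into $L_{i-1}$. I would expose layer by layer, as in the computation of $\PP(E_i\mid E_1,\dots,E_{i-1})$ in Theorem~\ref{thm.ubreg}. With $m_i=(n-\ell)+s_1+\dots+s_{i-1}$, the conditional probability is then roughly
\[\binom{\ell-s_1-\dots-s_{i-1}}{s_i}\Bigl(\tfrac{m_i}{n}\Bigr)^{r s_i}.\]
The point where $r\ge2$ enters is that for $r=1$ these factors do not beat the entropy; this is consistent with the permutation example noted above. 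For $r\ge2$, the exponent $r s_i$ should dominate. In the early layers $m_i$ is about $n/100$, so a factor like $(1/100)^{2}$ per vertex is paid while layers are small. Once the layers have accumulated a constant fraction of $n$, the residual configuration count can be bounded crudely. Summing over compositions $(s_i)$ costs only $2^{\ell}$ at worst, and in fact $e^{o(n)}$ if I restrict to layers of size at least $\epsilon n$ after a short initial phase.

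An alternative I would try first, because it is simpler, is a two-stage argument. First observe that every acyclic $A$ contains a set $S$ of at least $\ell/2$ vertices that all send their out-arcs within $A$ only ``forward'' in a fixed topological order, splitting at the median position. Then choose the split point to balance entropy against the bound $\bigl(1-\tfrac{x\ell}{n}\bigr)^{r(\ell-x\ell)}$, optimizing over unequal part sizes $x\ell$, $(1-x)\ell$. The count of such splits is $\binom{\ell}{x\ell}$ rather than $2^\ell$. For small $x$ this entropy is tiny, while the probability is about $\exp(-r x(1-x)\ell^2/n)$; the saving is the gap between $r x(1-x)\cdot 0.99$ and $H(x)+0.056/0.99$.

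The main obstacle is this numerical step. With $r=2$ a single cut does not obviously win, since $2x(1-x)\cdot0.99$ versus $H(x)$ fails for small $x$. So I expect to need several cuts with geometrically chosen part sizes, or the layer argument. The task is then to verify that the resulting explicit exponent is negative at $\beta=0.99$ for $r=2$, and monotone in $r$ so that $r\ge2$ follows.
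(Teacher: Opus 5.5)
There is a genuine gap. The decisive estimate is never proved: you need $\PP(D^*[A]\text{ acyclic})\le e^{-cn}$ with $c$ large enough to beat $\binom{n}{\ell}$, and you call this ``the main obstacle'' and leave it open. Neither of your two routes is carried through.

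The layer route, as displayed, drops the condition that each vertex of $L_i$ ($i\ge2$) has an out-arc into $L_{i-1}$. Without that condition the sum over layerings is superexponential: the all-singleton layering alone contributes $\ell!\prod_{i=1}^{\ell}\bigl(\frac{n-\ell+i-1}{n}\bigr)^{r}\ge \ell!\,\bigl(\frac{n-\ell}{n}\bigr)^{r\ell}$, which is about $\ell!\,100^{-r\ell}$. With the condition included, you have not shown that the cost of up to $2^{\ell-1}\approx e^{0.69n}$ compositions is beaten. You also cannot simply ``restrict'' to large layers, because the layer sizes are dictated by $D^*$.

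More importantly, your reason for abandoning the partition argument of Theorem~\ref{thm.ubreg} is mistaken. What is too lossy there is the probability estimate, not the count $k^\ell$. The bound $\exp(-r(1-\frac1k)\ell^2/(2n))$ replaces the true ratio $\frac{n+\ell/k-\ell}{n-(i-1)\ell/k}$ by $1-\frac{(k-i)\ell}{kn}$ and then by an exponential. Your $(1-\frac{x\ell}{n})^{r(1-x)\ell}$ additionally freezes the ratio within a part. Both ignore that the pool of unused in-points shrinks while the admissible ones stay few (about $r(n-\ell)$ plus those of already-exposed parts). When $\ell\approx0.99n$ this throws away almost everything.

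The paper keeps the equal $k$-partition and the count $(ekn/\ell)^\ell$. It then bounds
$\PP(E_i\mid E_1,\dots,E_{i-1})\le\bigl(\frac{n+\ell/k-\ell}{n-(i-1)\ell/k}\bigr)^{r\ell/k}\le\bigl(\frac{k/\beta+1-k}{k/\beta+1-i}\bigr)^{r\ell/k}$
and compares $\sum_i\ln(\frac k\beta+1-i)$ with an integral. This gives the exponent $1+r(1-\frac1k)+(1-\frac r\beta)\ln\frac k\beta+r(\frac1\beta+\frac1k)\ln(\frac k\beta+1-k)$. It is linear in $r$ with negative slope and already negative at $r=2$ for $k=100$, $\beta=\frac{99}{100}$.

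In fact even your single balanced cut succeeds once the probability is computed exactly. Take even $\ell=0.99n+O(1)$ and $n$ large. Then $\PP(\text{no arc from }A_2\text{ to }A_1)=\binom{rn-r\ell/2}{r\ell/2}\big/\binom{rn}{r\ell/2}\le e^{-0.64rn}$. On the other hand there are at most $\binom{n}{\ell}\binom{\ell}{\ell/2}\le e^{0.75n}$ pairs $(A,A_1)$, so the union bound is $e^{-\Omega(n)}$ for every $r\ge2$.
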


\begin{proof}
Going over the proof of Theorem~\ref{thm.ubreg}, we now bound the probability of the event $E_i$ conditioned on $E_1\cap\ldots\cap E_{i-1}$ as follows:
\[\mathbb P(E_i \,|\, E_1,\ldots,E_{i-1})=\prod_{j=0}^{r\frac{\ell}{k} - 1}\frac{rn +r\tfrac{\ell}{k} - r\ell - j}{rn - r(i-1)\tfrac{\ell}{k} -j} 
\leq \left(\frac{n+\tfrac{\ell}{k}-\ell}{n - (i-1)\tfrac{\ell}{k}}\right)^{\frac{r\ell}{k}}\]
\[\leq \left(\frac{\tfrac{k}{\beta}+1-k}{\tfrac{k}{\beta}+1-i}\right)^{\frac{r\ell}{k}}, \]
and so the product $\prod_{i=1}^k\mathbb P(E_i)$ is upper-bounded by
\[\left(\left(\frac{k}{\beta}+1-k\right)^k\prod_{i=1}^k\frac{1}{\frac{k}{\beta}+1-i}\right)^{\frac{r\ell}{k}}.\]
We have that
\[\sum_{i=1}^k \ln\left(\tfrac{k}{\beta}+1-i\right) \geq \int_1^{k} \ln\left(\tfrac{k}{\beta}+1-x\right)dx\]
\[=\left[-x-\left(\tfrac{k}{\beta}+1-x\right)\ln\left(\tfrac{k}{\beta}+1-x\right)\right]_{1}^{k}\]
\[= 1-k-\left(\tfrac{k}{\beta}+1-k\right)\ln\left(\tfrac{k}{\beta}+1-k\right)+\tfrac{k}{\beta}\ln\tfrac{k}{\beta},\]
so $\mathbb P(\vec\alpha(D^*)\geq\ell)$ is at most
\[\exp\left\{\ell\left(1+r\left(1-\tfrac{1}{k}\right)+\left(1-\tfrac{r}{\beta}\right)\ln\tfrac{k}{\beta}+r\left(\tfrac{1}{\beta}+\tfrac{1}{k}\right)\ln\left(\tfrac{k}{\beta}+1-k\right)\right)\right\}.\]
Therefore, it is enough to ask that
$$1+r\left(1-\tfrac{1}{k}\right)+\left(1-\tfrac{r}{\beta}\right)\ln\tfrac{k}{\beta}+r\left(\tfrac{1}{\beta}+\tfrac{1}{k}\right)\ln\left(\tfrac{k}{\beta}+1-k\right)<0,$$
which, for $r\geq 2$, is satisfied by $k=100$ and $\beta=99/100$.
\end{proof}

There is a natural (fast) greedy algorithm which yields an acyclic set 
in a loop-free digraph $D$. (The $U$ is for `unused'.)\\

input: \, a digraph $D$ and a total order $\preceq$ on $V(D)$

set $A=U=\emptyset$ and $W=V(D)$

while $W \neq \emptyset$ 

\hspace{.3in}
let $w$ be the first (smallest) vertex in $W$ 
 and reveal $N^+(w)$

\hspace{.3in} move $w$ into $A$ and move $N^+(w) \cap (W \backslash \{w\})$ into $U$

output: \, $A$\\

Observe that the output set $A$ is acyclic if we ignore any loops, 
since all arcs 
point `backwards' to vertices added earlier. We call $A$ the \emph{greedy acyclic set} of $D$ with respect to $\preceq$, and denote its size by $\vec{\alpha}(D,\preceq)$. 
 (If we wanted to find a large acyclic set in a general loop-free digraph, not necessarily random, we would pick $\preceq$ uniformly at random.)

\begin{theorem} \label{thm.lbreg2}
Let $\preceq$ be a total order on $[n]$, 
let $r$ be a positive integer, and let $D$ 
 be a random digraph, chosen uniformly among all $r$-regular 
oriented graphs 
on $[n]$.
  Then $\vec{\alpha}(D,\preceq)$ $\geq \tfrac15 \, n \log_2 (r+1) /r$ wvhp. Moreover, for any $\varepsilon >0$ there exists an $r_{\varepsilon}$ such that, if $r\geq r_{\varepsilon}$, then $\vec{\alpha}(D,\preceq) \geq (\tfrac12-\varepsilon) \, n \ln (r+1) /r$ wvhp.
\end{theorem}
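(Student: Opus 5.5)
We will work with the directed configuration model $D^*=D^*(n,r)$. By Lemma~\ref{lem:constant_prob}, $D^*$ is an oriented graph with probability bounded away from $0$, and conditioned on this it is uniform. Hence it suffices to show that $\PP(\vec\alpha(D^*,\preceq) < \text{bound}) = e^{-\Omega(n)}$. One caveat: loops and multiple arcs in $D^*$ do not break the greedy algorithm. Revealing $N^+(w)$ amounts to exposing the partners of the $r$ points of $D^+[w]$.

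The core is to run the greedy algorithm while exposing the configuration lazily. At step $t$ we take the smallest $w\in W$ and expose the pairings of the still unpaired points of $D^+[w]$. Some of these points may already be paired, because earlier arcs into $w$ have been exposed only from the in-side. Each newly exposed out-point is matched uniformly to an unpaired point of $D^-$. It hits a vertex of $W$ with probability at most $r|W|/(\text{unpaired in-points}) \le |W|/(n-t)$ roughly. This is because $W\cap A=\emptyset$ and every vertex of $W$ still has all $r$ of its in-points free: an in-point of a vertex in $W$ that got paired would have moved that vertex into $U$. Conversely, the number of unpaired in-points is at least $r(n-t)$ minus the in-points used up on vertices in $U$. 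To get a clean bound I would instead use the exact accounting. After $t$ steps, at most $rt$ in-points are paired in total. Each removal of a vertex from $W$ into $U$ consumes at least one of those exposures. So $|U_t|\le$ the number of exposures that landed in $W$, and each exposure lands in $W$ with probability at most $r|W_s|/(rn-rt)$.

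This gives a differential-equation / supermartingale comparison. Let $x=t/n$ and $y=|W_t|/n$. Then $y$ drops per step by $1$ (the vertex $w$) plus at most $r\cdot y/(1-x)$ in expectation. Comparing with the ODE $y' = -1 - r y/(1-x)$, $y(0)=1$, the greedy process stops at time $\tau n$ with $y(\tau)=0$, and $|A|=\tau n$. Solving the linear ODE gives $y(x) = (1-x)^r\bigl(1 - \int_0^x (1-s)^{-r}ds\bigr)$, which vanishes when $\frac{(1-\tau)^{1-r}-1}{r-1}=1$, i.e.\ $\tau = 1 - r^{-1/(r-1)} \approx \ln r/r$. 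A cruder alternative is this: as long as $t\le \tau n$, each step removes at most $1+\text{Bin}(r, y)$-type many vertices, so $\mathbb E[\text{removed}] \le 1 + r$ per step, giving only $n/(r+1)$. To capture the $\log$, I would use the $(1-x)$ denominator, which is where the gain appears. I would therefore rather split time into phases in which $|W|$ halves: while $n/2^{j+1}\le|W|\le n/2^j$, each step removes in expectation at most $1+r2^{-j}/(1-x)$ vertices. The phase then lasts at least about $\frac{n2^{-j-1}}{1+r2^{-j}\cdot c}$ steps. Summing over $j\le \log_2(r+1)$ gives $\Omega(n\log_2(r+1)/r)$, and careful constants should give $\tfrac15$. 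The asymptotic $(\tfrac12-\varepsilon)\ln(r+1)/r$ would come from the finer ODE analysis above with $1-x\ge 1-\varepsilon'$ throughout, since $\tau\to0$. I expect a loss of a factor $\tfrac12$ in their argument from bounding the per-step removal; I will aim for that constant and not fight it.

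Concentration is the main technical obstacle. In each phase, the number of vertices removed in $m$ steps is dominated by $m$ plus a sum of $\le rm$ Bernoulli indicators. Each indicator has conditional success probability at most $p_j = 2^{-j}/(1-x)$, where we use that each exposed out-point hits a $W$-vertex with conditional probability $\le r|W|/(\text{free in-points})$ and that the free in-points number at least $r(n-t)$. By a martingale Chernoff bound (Azuma, or stochastic domination by a binomial), the probability that the phase ends more than a constant factor early is $e^{-\Omega(n/r\cdot\text{poly})}$. Since $r$ is fixed, this is $e^{-\Omega(n)}$. A union bound over the $O(\log r)$ phases finishes the proof, with the dependence of $r$ only entering the constant in $\Omega(n)$.
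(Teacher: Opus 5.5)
Your plan is correct and is essentially the paper's proof. Both arguments work in $D^*$ with lazy exposure of out-points during the greedy run, and bound each step's cost by $1$ plus a count with mean at most $r|W|/(n-|A|)$. Both split the run into stages in which $|W|$ drops by a fixed factor, get $e^{-\Omega(n)}$ concentration per stage by domination with independent variables, and finish with a union bound. For the $\tfrac12$ bound the paper uses finer stages $b=1+r^{-1}$ rather than an ODE, but your own halving phases already suffice: if you do not round $1+r2^{-j}c$ up, they give roughly $\tfrac{1}{2\ln 2}>\tfrac12$.

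The one step you leave implicit is the paper's truncation at $|A|\le\alpha n$, and it is what actually justifies your constant $c$ and your claim $1-x\ge 1-\varepsilon'$. Either $|A|$ reaches $\alpha n$, which already suffices (because $\log_2(r+1)\le r$ when $\alpha=\tfrac15$, and because $r\ge r_\varepsilon$ when $\alpha$ is small), or $x\le\alpha$ throughout the stages that matter.
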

Note that our lower bounds on $\vec{\alpha}(D,\preceq)$ match the upper bound in Theorem~\ref{thm.ubreg} on $\vec{\alpha}(D)$ up to a constant factor.
From now on, $\log$ means $\log_2$. Let $0<\alpha<1$ be fixed. To prove Theorem~\ref{thm.lbreg2} we shall use a truncated version of the above greedy algorithm, where we replace the condition `while $W \neq \emptyset$' by `while $W \neq \emptyset$ and $|A| \leq \alpha n$'. Later we shall set $\alpha=\frac15$.

\smallskip

We shall prove that, when $\alpha=\frac15$, for $D^*$ the random $n$-vertex $r$-regular multidigraph obtained with the directed version of configuration model (as in the proof of Theorem~\ref{thm.ubreg}), the algorithm yields a set $A$ with 
\begin{equation} \label{eqn.forthm}
|A| \geq \tfrac15 \, n \log(r+1)/r \;\text{ wvhp}.
\end{equation}
But by Lemma~\ref{lem:constant_prob} the probability that $D^*$ is an oriented graph 
is bounded away from 0, and all oriented graphs 
have the same probability of appearing as $D^*$, so the theorem will follow. (The proof below shows that, essentially, the bounds of Theorem~\ref{thm.lbreg2} hold also for $\vec\alpha(D^*)$, 
since the expected number of loops in $D^*$ is $r$.)

\smallskip

We use one preliminary lemma.  Let $n \geq 1$ and $0 \leq a,b \leq n$.  Let $U$ and $V$ be disjoint $n$-sets, and let $G$ be the complete bipartite graph with parts $U$ and $V$.  Let $M$ be a random perfect matching in $G$ chosen uniformly from the $n!$ perfect matchings in $G$.
Let $A \subseteq U$ with $|A|=a$ and $B \subseteq V$ with $|B|=b$. Let the random variable $X(n,A,B)$ (or less precisely $X(n,a,b)$) be the number of edges in $M$ between $A$ and $B$. Observe that $\EE[X] = ab/n$. 

If $X,Y$ are two random variables, we say that $X$ is \emph{stochastically dominated} by $Y$ if $\PP(X\geq t)\leq \PP(Y\geq t)$ for every $t$, and we denote it by $X\leq_s Y$.

\begin{lemma} \label{lem.X}
  Let $n, n' \geq 1$, let $a,b \leq n$, let $a', b' \leq n'$; and suppose that $n' \leq n$, $a' \geq a$ and $b' \geq b$.  Let $Y = X(n,a,b)$ and $Y'=X(n',a',b')$.  Then $Y \leq_s Y'$.  
\end{lemma}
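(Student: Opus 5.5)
The plan is to split the claim into three monotonicity statements and chain them:
\[
X(n,a,b)\leq_s X(n,a',b)\leq_s X(n,a',b')\leq_s X(n',a',b').
\]
Stochastic domination is transitive, so it suffices to prove each step separately. Throughout, I work with the uniform random perfect matching $M$ between $U$ and $V$.

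The first two steps, which increase $a$ or $b$ with $n$ fixed, are handled by coupling on a single matching. Take $A\subseteq A'\subseteq U$ with $|A|=a$ and $|A'|=a'$, and fix $B\subseteq V$. For the same random $M$, every edge of $M$ between $A$ and $B$ is also an edge between $A'$ and $B$. Hence $X(n,A,B)\leq X(n,A',B)$ pointwise, which gives the domination. The same argument enlarges $B$ to $B'$. Because the law of $X(n,A,B)$ depends only on $(n,|A|,|B|)$ by symmetry, these couplings prove the statements for the sizes.

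The third step, shrinking $n$ to $n'\leq n$ with $a'$ and $b'$ fixed (valid since $a',b'\leq n'$), carries the real content. I will argue by induction, reducing $n$ by one at a time, and look for a coupling of $X(n,a,b)$ with $X(n-1,a,b)$ in which the first is at most the second.

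To build it, pick $u\in U\setminus A$ and $v\in V\setminus B$; these exist when $a,b\leq n-1$. Let $M$ be uniform on the $n$-vertex problem. Construct $M'$ on $U\setminus\{u\}$ and $V\setminus\{v\}$ as follows. If $uv\in M$, set $M'=M-uv$. Otherwise $M$ contains edges $uy$ and $xv$; remove both and add $xy$.

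Two things must then be checked. First, $M'$ is uniform. Each matching $M'$ of the smaller problem has exactly $n$ preimages: $M'+uv$ and the $n-1$ matchings obtained by splitting one edge $xy$ of $M'$. Second, $X$ does not decrease. Since $u\notin A$ and $v\notin B$, the removed edges $uv$, $uy$ and $xv$ never count, while the added edge $xy$ may count. Therefore $X(M)\leq X(M')$.

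The main obstacle is exactly this $n$-reduction step: guessing the right switching and verifying uniformity of $M'$. The preimage count above settles it. Chaining the three steps then yields $Y\leq_s Y'$.
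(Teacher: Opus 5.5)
Your proof is correct, and it shares the paper's overall structure: three monotonicity steps, with enlarging $a$ or $b$ handled by the inclusion coupling on a fixed matching. The difference is in the $n$-reduction step.

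The paper fixes only $u\in U\setminus A$ and conditions on the partner $v$ of $u$. This writes $\PP(X(n+1,a,b)\geq i)$ as a mixture of $\PP(X(n,a,b-1)\geq i)$ (when $v\in B$) and $\PP(X(n,a,b)\geq i)$ (when $v\notin B$). The paper then invokes the $b$-monotonicity claim to bound the first term.

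You instead fix both $u\notin A$ and $v\notin B$ in advance and use the switching $\{uy,xv\}\mapsto xy$. The exact $n$-to-one preimage count gives uniformity of $M'$, and since the discarded edges never count, $X(M)\leq X(M')$ holds pointwise.

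Your route does not lean on the $b$-monotonicity step. Moreover, every step is either a change of the sets $A,B$ or a deterministic map applied to a single uniform matching. So by choosing $A\subseteq A'$, $B\subseteq B'$ and removing vertices outside $A'$ and $B'$, you get one explicit coupling in which $Y\leq Y'$ pointwise, not just the tail inequality. The paper's route is shorter: it avoids the bijective counting but yields only the inequality of tail probabilities.
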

\begin{proof}[Proof of Lemma~\ref{lem.X}]
It suffices to establish the following three simple claims.

\begin{equation} \label{claim1}
\mbox{If } \; a+1 \leq n \; \mbox{ then } \; X(n,a,b) \leq_s X(n,a+1,b).
\end{equation}

\begin{equation} \label{claim2}
\mbox{If } \; b+1 \leq n \; \mbox{ then } \; X(n,a,b) \leq_s X(n,a,b+1).
\end{equation}

\begin{equation} \label{claim3}
 X(n+1,a,b) \leq_s X(n,a,b).
\end{equation}
To prove~(\ref{claim1}), let $a+1 \leq n$; let $A \subseteq A' \subseteq U$ with $|A|=a, |A'|=a+1$; and let $B \subseteq V$ with $|B|=b$.
Then always $X(n,A,B) \leq X(n,A',B)$ so~(\ref{claim1}) holds.  We may prove~(\ref{claim2}) in the same way.

It remains to prove~(\ref{claim3}).  Let $U$ and $V$ be disjoint $(n+1)$-sets, let $A \subseteq U$ with $|A|=a \leq n$ and let $B \subseteq V$ with $|B|=b \leq n$.
Let $u \in U \backslash A$, and let $v$ be the random vertex in $V$ paired with $u$ in the random matching $M$.   Then for each relevant integer $i$
\begin{eqnarray*}
&& \PP(X(n+1,a,b) \geq i)\\
&=&
\PP((X(n+1,A,B) \geq i) \land (v \in B)) + \PP((X(n+1,A,B) \geq i) \land (v \not\in B)) \\
&=&
\tfrac{b}{n+1}\, \PP(X(n,a,b-1) \geq i) + \tfrac{n+1-b}{n+1}\, \PP(X(n,a,b) \geq i)\\
& \leq &
\tfrac{b}{n+1}\, \PP(X(n,a,b) \geq i) + \tfrac{n+1-b}{n+1}\, \PP(X(n,a,b) \geq i) \;\;\mbox{ by } (\ref{claim2})\\
&=&
\PP(X(n,a,b) \geq i).
\end{eqnarray*}
Now~(\ref{claim3}) follows, and the proof is complete.
\end{proof}

\begin{proof}[Proof of~(\ref{eqn.forthm}), and thus of Theorem~\ref{thm.lbreg2}]
Consider part way through a run of the algorithm, when we are about to reveal $N^+(w)$.
At this time, we know the sets $A$, $U$ and $W$ of vertices in $G$; we know all the $r$ arcs out of each vertex in $A$ (that is, we know the edges in the random matching $M$ which are incident to the points corresponding to an out-incidence of a vertex in $A$), and all these arcs go from $A$ to $A \cup U$.  The remaining $r(n-|A|)$ edges in $M$ form a uniformly random perfect matching $M'$ in the bipartite graph over the remaining points.
Let the \emph{cost} $Y$ when revealing $N^+(w)$
be the consequent reduction in the size of $W$. Then
\begin{equation} \label{eqn.cost}
Y \leq_s 1+ X(r(n-|A|),r,r|W|) \leq_s 1+ X(r(n-\lfloor\alpha n\rfloor),r,r|W|),
\end{equation}
where we use Lemma~\ref{lem.X} for the second inequality $\leq_s$.
Note that $1 \leq Y \leq r+1$, and $\EE[Y] \leq 1+ \frac{r|W|}{n(1-\alpha)}$.

Let $b\geq 1+r^{-1}$ be a constant. We assume that $n$ is large enough. Divide the runs of the algorithm into stages $s=1,2,\ldots$, where stage $s$ is when $n b^{-s+1} \geq |W| > nb^{-s}$.
Consider stage $s$, where $1 \leq s \leq \log_b (r+1)$.
Let $Y_1,Y_2,\ldots$ be the costs of the first, second,...~vertices added to $A$ in this stage, where we set $Y_i=0$ if the algorithm stops before adding the $i$th vertex or $|W|$ has decreased to at most $n b^{-s}$. If we add an $i$th vertex, then at this time $|W| \leq nb^{-s+1}$ and $|A| \leq \lfloor\alpha n\rfloor$, so by~(\ref{eqn.cost}) (conditional on all history to date)
\[ Y_i \leq_s 1+X(r(n-\lfloor\alpha n\rfloor), r,r|W|) \leq_s Z \]
where $Z \sim 1+X(r(n-\lfloor\alpha n\rfloor), r, rnb^{-s+1})$.  Further, let $Z_1,\ldots,Z_k$ be independent copies of $Z$: then
jointly $(Y_1,\ldots,Y_k) \leq_s (Z_1,\ldots,Z_k)$,
and so $\sum_{i=1}^k Y_i \leq_s \sum_{i=1}^k Z_i$.
Recall that $1 \leq Z \leq r+1$ and
$\EE[Z] = 1+\tfrac{rnb^{-s+1}}{n-\lfloor\alpha n\rfloor}\leq 1+\beta rb^{-s+1}$, where $\beta=(1-\alpha)^{-1}$.  But $rb^{-s+1} \geq \frac{br}{r+1} \geq 1$, so $\EE[Z] \leq (1+\beta) r b^{-s+1}$. 

Let $\gamma<(\beta+1)^{-1}$ and $k = \frac{\gamma n(b-1)}{br}$, and note that $(k+1)\, \EE[Z] \leq (\beta+1)\gamma n(b^{-s+1}-b^{-s})+(1+\beta)rb^{-s+1}$.  Thus $\sum_{i=1}^{\lceil k\rceil} Z_i \leq n(b^{-s+1}-b^{-s}) - (r+1)$ wvhp, by a standard inequality, see for example~\cite[Section 10.1]{MR2002}.
Hence, in this stage wvhp either we add at least $\lceil k\rceil$ vertices to $A$, or by the end of the stage we have 
$|A| \geq \alpha n$. 
This holds for each stage $s=1,\ldots,\log_b (r+1)$.  Hence after these stages, wvhp either $|A| \geq \log_b (r+1) k = \frac{\gamma (b-1) n \log_b (r+1)}{br}$ or $|A| \geq \alpha n$. 

Finally set $b=2$, $\alpha = \frac15$ (so $\beta=\frac54$), and $\gamma=\frac25$.  Then
\[ \min \{ \tfrac{\gamma (b-1) \log_b (r+1)}{br}, \alpha \} =
\min \{ \tfrac{\log(r+1)}{5 r}, \tfrac15 \} = \tfrac{\log(r+1)}{5 r}.\]
Thus after the stages above we have $|A| \geq \frac{\log(r+1)}{5 r} n$ wvhp, and we have proved~(\ref{eqn.forthm}) as required. Alternatively, if $b=1+r^{-1}$ and $\alpha,\gamma$ are chosen arbitrarily close to $0$ and $\tfrac12$, respectively, and assuming that $r$ is large enough, then
\[ \min \{ \tfrac{\gamma (b-1) \log_b (r+1)}{br}, \alpha \} =\tfrac{\gamma (b-1) \log_b (r+1)}{br}\geq(\tfrac12-\varepsilon)\tfrac{\ln(r+1)}{r}\]
for any given $\varepsilon\in\mathbb R^+$.
\end{proof}

We know that for every $r$-regular simple digraph $D$,  $\vec{\chi}(D) \leq r+1$  and so $\vec{\alpha}(D) \geq n/(r+1)$ (see Lemmas~\ref{lem:degeneracy_order} and~\ref{lem:colouring_degenerate}). The lower bound here on $\vec{\alpha}(D)$ is 
 better than that in Theorem~\ref{thm.lbreg2} for small $r$. 

Finally, note that both Theorem~\ref{thm.ubreg} and Theorem~\ref{thm.lbreg2} hold also if $D$ is chosen uniformly at random among all $r$-regular $n$-vertex simple digraphs (i.e.~allowing digons). Indeed, we may use essentially the same proofs: in the first paragraph of the proof of Theorem~\ref{thm.ubreg} we can just replace `oriented graph' by `simple digraph', and we can do the same in the paragraph following~(\ref{eqn.forthm}).

\vspace{5mm}
\textbf{Acknowledgements.} The first and third authors are supported by the ANR project 21-CE48-0012 DAGDigDec (DAGs and Digraph Decompositions). The third author is also partially supported by the MICINN project PID2022-137283NB-C22 ACoGe (Algebraic combinatorics and its connections to geometry). Some of this work was carried out during the First Armenian Workshop on Graphs, Combinatorics, Probability and their Applications to Machine Learning (2019) and the authors
are grateful to the organizers. The authors are also grateful to the referees for helpful comments.

\end{document}